\newtheorem{theorem}{Theorem}[section]
\newtheorem{lemma}[theorem]{Lemma}
\newtheorem{proposition}{Proposition}
\theoremstyle{definition}
\newtheorem{remark}{Remark}
\newcommand{\R}{\mathbb{R}}
\newcommand{\N}{\mathbb{N}}
\DeclareMathOperator{\spt}{supp}
\DeclareMathOperator{\diam}{diam}
\DeclareMathOperator{\dive}{div}
\DeclareMathOperator{\curl}{curl}
\title[{Classical solutions for the system ${\text{curl}\,v=g}$}] 
{Classical solutions for the system $\mathbf
  {\text{curl}\,v=g}$, with vanishing
  Dirichlet boundary conditions}
\author[Luigi C. Berselli and Placido Longo]{}
\subjclass{Primary:  26B12; Secondary: 35C05, 35F15}
\keywords{Curl and divergence equation, first order systems, classical solutions}
\email{luigi.carlo.berselli@unipi.it}
\email{placido.longo@unipi.it}
\begin{document}

\maketitle

\centerline{\scshape Luigi C. Berselli and Placido Longo}
\medskip
{\footnotesize
 \centerline{Dipartimento di Matematica}
\centerline{Universit\`a di Pisa}
   \centerline{Via F. Buonarroti 1/c}
   \centerline{Pisa, ITALY, I-56127}
} 

\bigskip


\begin{abstract}
  We consider the boundary value problem associated to the curl operator, with vanishing
  Dirichlet boundary conditions. We prove, under mild regularity of the data of the
  problem, existence of classical solutions.

\bigskip

\end{abstract}

\section{Introduction}
The aim of this paper is to solve, in the classical setting, the first-order boundary
value problem
\begin{equation}
  \label{eq:curl}
  \begin{aligned}
    \curl v&=g\qquad\text{in }\Omega,
    \\
    v&=0\qquad\text{on }\partial\Omega.
  \end{aligned}
\end{equation}
More precisely, we look for solutions which are vector fields $v\in (C^{1}(\Omega)\cap
C^{0}(\overline{\Omega}))^3$, where $\Omega\subset\R^{3}$ is a smooth and bounded
domain. The history of this problem is rather long and strictly connected with that of the
solution of the divergence equation, with the Helmholtz decomposition, and with the
analysis of certain topological properties of the domain. We observe that if such a
solution exists and is also two times differentiable (at least in a weak sense), then
necessarily $\dive g=\dive\curl v=0$. This gives a natural compatibility condition for the
datum of the problem. In addition, if the problem is studied in the whole space, or in the
periodic setting (or in general when no condition is imposed at the boundary), then
constructing a solution is very simple. We nevertheless recall that in general may exist
infinite solutions to~\eqref{eq:curl} and in practical problems, constructing at least one
with the requested regularity is enough. By following the classical work of von Helmholtz
in electromagnetism~\cite{Hel1870}, we can directly check that if for $x\in \R^{3}$ we
define the vector $G(x)$ by means of the solution in the whole space of $\Delta G(x)=g(x)$
namely by using the Newtonian potential
$G^{j}(x)=-\frac{1}{4\pi}\int_{\R^{3}}\frac{g^{j}(y)}{|x-y|}\,dy$, and if we set
$A:=-\curl G$, then (provided that $\dive g=0$)
\begin{equation*}
\curl A=g.
\end{equation*}
It was later recognized the extremely high relevance of this result, when formulated in
appropriate function spaces, in the theory of partial differential equations, especially
in connection with classical electromagnetism and fluid mechanics.  A systematic study of
space decomposition in the sum of a gradient and the curl of a vector potential was
initiated by Weyl~\cite{Wey1940}. The reader can find an up-to-date reference in
Galdi~\cite[Ch. III]{Gal2011}, with the relevant applications to the field of mathematical
fluid mechanics.

On the other hand, the solution of the various curl systems in presence of boundary
conditions requires a special treatment, since the problems may impose some geometric
conditions on the domain and the construction requires special techniques. In this respect
we recall especially the papers by Borchers and Sohr~\cite{BS1990}, von
Wahl~\cite{vWa1990}, and Bolik and von Wahl~\cite{BvW1997}, based on representation
formulas and on reduction to appropriate integral equations. We also recall the recent
results by Kozono and Yanagisawa~\cite{KY2009}, based on the classical theory of Agmon,
Douglis, and Nirenberg and to reduction to a suitable family of boundary value problems.

In this paper we are mainly interested in classical solutions: to find an appropriate $X
\subseteq (C(\Omega))^{3}$ such that, for all $g\in X$, one can construct a solution $v$
of class $C^{1}$ in the open set $\Omega$, satisfying the homogeneous boundary conditions
in the classical sense of $C(\overline{\Omega})$. We work in domains star-shaped with
respect to a ball, hence the topology of the domain is very simple, avoiding the
pathologies considered in~\cite{BvW1997}. As the main point of our paper is to find a
suitable subspace of continuous functions, such that the problem can be solved, this is
connected with the limiting theory (that is in $L^{\infty}(\Omega)$) for partial
differential equations. We recall that the solution of elliptic equations with right-hand
side in $L^{\infty}(\Omega)$ does not produce --even for the Poisson problem-- functions
with bounded second order derivatives. Nevertheless, it is well-known that for
$2^{\text{nd}}$ order elliptic equations by assuming a H\"older (denoted by
$C^{0,\alpha}(\Omega)$) or even Dini (denoted by $C_{D}(\Omega)$) continuous datum is
enough to produce $C^2(\Omega)$ solutions. Here, the situation is even more complex, due
to the fact that~\eqref{eq:curl} is not an elliptic systems. In addition, we are not
interested in special features as those exploited by Bourgain and Brezis~\cite{BB2007} for
the right-hand side in $L^{3}(\Omega)$ concerning whether or note there exists at least a
solution in $W^{1,3}(\Omega)\cap L^{\infty}(\Omega)$.

Our aim is to exploit a representation formula similar to the one introduced by
Bogovski\u\i~\cite{Bog1980} for the divergence system (see also the review in
Galdi~\cite{Gal2011}), for which we have recently proved in~\cite{BL2017} a similar
results of resolvability in the space of classical solutions.  For our purposes is then
relevant to use an integral representation formula, which is on the same lines of that
previously introduced by Griesinger~\cite{Gri1990a,Gri1990b}. We use classical tools as
those developed by Korn~\cite{KR1} for the study of H\"older continuity (and explained for
the Newtonian potential in Gilbarg and Trudinger~\cite{GT1998}) and we use fine properties
of the Dini continuous functions to determine a class of right-hand sides producing a
$C^{1}(\Omega)$ solution.  The introduction of the integral control of the modulus of
continuity dates back to Dini~\cite{Dini1902} for elliptic equations. Starting from this
work it became a sort of classical borderline conditions to have continuity of second
order derivatives for elliptic equations of second order. We wish to mention that the
assumption of Dini continuous data in problems of  fluid mechanics started with the
paper of Beir\~ao da Veiga~\cite{Bei1984}. Therein, considering the 2D Euler equations for
incompressible fluids, a unique solution is constructed in the critical space for the
vorticity $C(0,T;C_{D}(\Omega))$. More recently the same functional setting have been also
employed by Koch~\cite{Koch2002} and in~\cite{BB2015} to analyze properties of the
long-time behavior. In addition, the construction of classical solutions of the Stokes
system with Dini-continuous data has been recently provided in papers by Beir\~ao da
Veiga~\cite{Bei2014,Bei2015,Bei2016} and this motivates also our analysis of the
divergence and curl operator, since they are some of the building blocks in the theory of
mathematical fluid mechanics.

We already observed that if $\curl v=g$, then necessarily $\dive g=0$; hence the main
result we prove is the following.
\begin{theorem}
  \label{thm:main_theorem}
  Let $\Omega\subset \R^3$ bounded and star-shaped with respect to
  $\overline{B}=\overline{B(0,1)}$.  Let be given $g=(g^{1},g^{2},g^{3})\in
  (C_{D}({\Omega}))^{3}$ such that $\frac{
    \partial g^{i}(x)}{\partial {x_i}} $ exist for $i=1,2,3$, for all $x\in \Omega$, and
  with $\dive g=0$. Then, there exists at least a solution $v=(v^{1},v^{2},v^{3})$ of the
  curl system~\eqref{eq:curl} with homogeneous Dirichlet boundary conditions, such that
  $v\in (C^{1}({\Omega})\cap C(\overline{\Omega}))^{3}$.
\end{theorem}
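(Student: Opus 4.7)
The plan is to adapt to the curl system the Bogovski\u\i-type strategy used in~\cite{BL2017} for the divergence equation, starting from the integral formula of Griesinger~\cite{Gri1990a,Gri1990b}. Fix an auxiliary cutoff $\omega\in C^\infty_c(B)$ with $\int_{\R^3}\omega=1$ and define the candidate
\[
  v(x)=\int_{\Omega}K(x,y)\,g(y)\,dy,
\]
where $K(x,y)=L(x,x-y)$ is an explicit matrix-valued kernel, built by integrating $\omega$ along the ray from $x$ through $\overline{B}$ coupled with an antisymmetric tensor factor that produces a curl. The star-shapedness of $\Omega$ with respect to $\overline{B}$ guarantees that those rays stay inside $\Omega$, so the formula is well defined. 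The function $L(x,z)$ is smooth away from $z=0$, has a $|z|^{-2}$ principal singularity at the origin, and its first $z$-derivatives are of Calder\'on--Zygmund type with a $|z|^{-3}$ behaviour.

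From here I would carry out four steps. First, \emph{continuity up to the boundary and vanishing at $\partial\Omega$}: since $|x-y|^{-2}$ is integrable in $\R^3$, $v$ is bounded and continuous on $\overline{\Omega}$ as soon as $g\in(L^\infty(\Omega))^3$, and a direct inspection of the kernel gives $v(x)=0$ for $x\in\partial\Omega$. Second, \emph{differentiation in $\Omega$}: after the change of variable $z=x-y$ the integrand becomes $L(x,z)\,g(x-z)$ on the $x$-dependent domain $\Omega-x$, and formal differentiation in $x_k$ produces an only principal-value kernel of order $|z|^{-3}$. Third, I would apply Korn's splitting, as summarised in~\cite{GT1998}, writing
\[
  g(x-z)=\bigl[g(x-z)-g(x)\bigr]+g(x);
\]
the first summand gives an absolutely convergent, continuous integral thanks to the Dini modulus of $g$, while the leftover $g(x)$-piece, together with the contribution coming from the $x$-dependence of the domain $\Omega-x$, is rewritten through a pointwise integration by parts that uses precisely the existence of $\partial_i g^i$ and the identity $\dive g=0$, exactly as in~\cite{BL2017}. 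Fourth, with $\nabla v$ under control as a sum of Dini-continuous integrals, a direct computation identifies $\curl v$ with $g$, the surviving local term $g(x)$ being produced by the normalization $\int\omega=1$.

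The main obstacle is the third step. It is there that the Dini assumption is consumed, because Dini continuity is exactly what makes the singular part of $\nabla v$ convergent after the subtraction of $g(x)$, and it is there that the hypothesis of pointwise diagonal derivatives, coupled with $\dive g=0$, is used to dispose of the residual local contribution through a pointwise integration by parts that would be unavailable for a merely continuous $g$. The boundary continuity of step one and the verification of $\curl v=g$ in step four are then comparatively routine, being in essence already available in the cited literature.
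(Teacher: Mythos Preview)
Your proposal is essentially the paper's own approach: the candidate is the Griesinger formula $\mathcal{R}g$, the vanishing trace and continuity up to $\partial\Omega$ follow from the weak $|z|^{-2}$ singularity of the kernel, and $C^1$-regularity comes from Korn's subtraction $g(y)-g(x)$ together with the Dini hypothesis. The paper makes the differentiation rigorous via an explicit smooth radial cutoff $\eta(|x-y|/\epsilon)$, computing $\partial_{x_m}(\mathcal{R}^\epsilon g)^k$ exactly and proving uniform convergence on compact subsets as $\epsilon\to 0$; this is just the standard way of justifying the formal differentiation you describe in step~2.

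One small misplacement worth flagging: in your step~3 the constant $g(x)$-piece is \emph{not} disposed of using $\dive g=0$. In the paper that piece is handled by the kernel identity
\[
\partial_{x_m}N_i(x,y)=(x_i-y_i)\int_1^\infty\partial_m\psi(y+\alpha(x-y))\,\alpha(\alpha-1)\,d\alpha-\partial_{y_m}N_i(x,y),
\]
followed by Gauss--Green on a large ball $B_R\supset\supset\Omega$, which converts the $\partial_{y_m}N_i$ contribution into a harmless surface integral on $\partial B_R$; the $C^1$-regularity thus uses only the Dini modulus. The hypothesis $\dive g=0$ enters only in your step~4, the verification that $\curl(\mathcal{R}g)=g$: there an integration by parts in $y$ on the term containing $\partial_j\psi$ produces exactly $-\mathcal{B}[\dive g]$, which then vanishes. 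Your outline is correct, but the two hypotheses act in different steps than you indicate.
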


To conclude the introduction, we finally recall that the boundary value
problem~\eqref{eq:curl} is a system of first order and in addition that the solution is
not unique. Hence, many of the results valid for the Laplacian (or in general
for scalar elliptic equations of the second order) are not directly applicable.

\section{Notation and few basic results}
\label{sec:notation}
We fix now the notation which will be used throughout the paper.  In the sequel we denote
by $B=B(0,1)$ the unit ball in $\R^3$ 
\begin{equation*}
  B(0,1):=\left\{y\in \R^{3}:\ |y|<1\right\},
\end{equation*}
and $\Omega\subset\R^3$ will be an open and bounded, star-shaped with respect to all
points of $\overline{B}$. Whenever we write the representation formulas, we are assuming
the above hypothesis on the open set $\Omega$.

Moreover, let $\psi\in C^\infty_0(\R^3)$ such that $\spt(\psi)\subset B(0,1)$ and
$\int_{\R^3}\psi(y)\,dy=1$.  Together with the customary Lebesgue spaces
$(L^{p}(\Omega),\|\,.\,\|_{L^{p}})$, in this paper we will use the notion of Dini
continuous functions. We recall that a function $f\in C(\overline{\Omega})$ is called Dini
continuous if its modulus of continuity
\begin{equation*}
  \omega(f,\rho):=\sup\{|f(x)-f(y)|\ \text{with }x,y\in\overline{\Omega}\text{ and }
  |x-y|\leq\rho\}, 
\end{equation*}
verifies
\begin{equation*}
  \int_{0}^{\diam(\Omega)}\frac{\omega(f,\rho)}{\rho}\,d\rho<+\infty.
\end{equation*}
The space of Dini continuous functions is denoted by $C_{D}({\Omega})$, is a Banach space
when endowed with the norm
\begin{equation*}
  \|f\|_{C_{D}}:=\max_{x\in \overline{\Omega}}|f(x)|+
  \int_{0}^{\diam(\Omega)}\frac{\omega(f,\rho)}{\rho}\,d\rho,
\end{equation*}
and it is compactly embedded into the space of uniformly continuous functions
$C(\overline{\Omega})$. 

The classical result due to Dini~\cite{Dini1902} (which followed on the wake of earlier
results on the convergence of Fourier series~\cite{Dini1880}) states that if $f\in
C_{D}({\Omega})$, and if $\Omega$ is a smooth domain, then the solution of the Poisson
problem
\begin{equation*}
  \begin{aligned}
    -\Delta u&=f\qquad \text{in }\Omega,
    \\
    u&=0\qquad\text{ on }\partial\Omega,
  \end{aligned}
\end{equation*}
is in $C^{2}(\Omega)$. The extension to elliptic problems as well as to the boundary
regularity (provided that the domain is smooth enough) seems part of the folklore in the
classical theory of elliptic partial differential equations,
see~\cite[Pb.~4.2]{GT1998}. On the other hand, it is well-known that the result of
continuity of second derivatives of $u$ is false if $f$ is just in
$C(\overline{\Omega})$. It is also clear that $C^{0,\alpha}({\Omega})\subseteq
C_{D}({\Omega})$ for all $0<\alpha\leq1$, if $\Omega$ is regular and bounded, hence the
result in the Dini-continuous setting is sharper than those of H\"older and Schauder in
the case of data which are H\"older continuous.
\section{The representation formula}
As explained in the introduction, we will prove the existence of a classical solution by
means of explicit representation formulas \textit{\`a la} Sobolev, as developed by
Bogovski\u\i. For the reader's convenience we recall such formulas in this section and we
take also the occasion to make some remarks on the role of the support of the involved
functions. In particular, as in our forthcoming companion paper on the divergence
equation~\cite{BL2017}, here we use an approach which is slightly different from the ones
previously employed in the literature and which allows also to treat data which cannot be
approximated by $C^{\infty}_{0}(\Omega)$ functions, as it happened in the $L^{p}(\Omega)$
(or even Orlicz spaces) cases treated in the existing literature.

The formulas we will use are a variant of the ``cubature formulas'' developed by
Sobolev~\cite{Sob1938,Sob1974} (see also the review in Burenkov~\cite{Bur1998}), which
have been adapted to the context of the curl operator by Griesinger~\cite{Gri1990b}. We
also observe that the representation formulas from~\cite{Gri1990b} are more general that
the one we use here, since they are valid for all space dimensions and for the curl
operator as well as for its adjoint. (Recall also that in three space dimensions $\curl$
and its adjoint are the same, modulo a change of sign). Anyway, in the specific case of
$\R^3$, the one we are mostly interested to, the representation formula developed
in~\cite[Theorem~3.2]{Gri1990b} is the following
\begin{equation}
  \label{eq:Regina}  
  (\mathcal{R}g)^{k}(x):=-\epsilon_{i j k}\int_{\R^3}z_i
  \,g^{j}(x-z)\int_1^\infty\psi(x-z+t z)\,t(t-1)\,dt dz, 
\end{equation}
for $k=1,2,3$ and where $\epsilon_{i j k}$ is the totally anti-symmetric Ricci tensor such
that the vector product is $(v\times w)^{i}=\epsilon_{i j k}v^{j} w^{k}$, when written in
orthogonal coordinates. We always use the Einstein convention of summation over repeated
indices.  The vector $g$ is intended to be extended by zero outside $\Omega$.

\begin{remark}
  in this paper we consider only the case $n=3$, since it is the most relevant in terms of
  applications to mathematical fluid mechanics. Nevertheless the same approach can be
  easily adapted 
  also to the problem in  $\Omega\subset \R^{n}$, for $n>3$.
\end{remark}

It will be useful to rewrite the representation formula~\eqref{eq:Regina} as follows
\begin{equation}
  \label{eq:rot-1}
  (\mathcal{R}g)^{k}(x)=-\epsilon_{i j
    k}\int_{\Omega}(x-y)_i\,g^{j}(y)\int_1^\infty\psi(y+\alpha(x-y))\,\alpha(\alpha-1) 
  \,d\alpha dy,
\end{equation}
by observing that the above integral involves the values of $g$ only over $\Omega$. We can
also use a more compact form by writing
\begin{equation}
  \label{eq:Regina-2}  
  (\mathcal{R}g)^{k}(x):=-\epsilon_{i j k}\int_{\Omega}\,g^{j}(y) N_i(x,y)\,dy,
\end{equation}
where 
\begin{equation}
  \label{eq:enne}
  N_i(x,y):=(x-y)_i\int_1^\infty\psi(y+\alpha(x-y))\,\alpha(\alpha-1)\,d\alpha.
\end{equation}

For the reader's convenience, we recall also the Bogovski\u\i{} formula defining the
solution of the divergence equation, since the two formulas are strictly connected. Let be
given $F:\Omega\to\R$ such that $\int_{\Omega}F(x)\,dx=0$, then a solution to the
boundary value problem
\begin{equation}
  \label{eq:div}
  \begin{aligned}
    \dive u&=F\qquad\text{in }\Omega,
    \\
    u&=0\qquad\text{on }\partial\Omega,
  \end{aligned}
\end{equation}
which we will denote by 
\begin{equation*}
  u(x)=\mathcal{B}F(x),
\end{equation*}
(where $\mathcal{B}$ denotes as usual the Bogovski\u\i{} operator) can be written as follows
\begin{equation}
  \label{eq:cal-B}
  (\mathcal{B}F)^{i}(x)= u^{i}(x)=\int_{\Omega}F(y)   \widetilde{N}_{i}(x,y)\,dy,
\end{equation}
where 
\begin{equation}
 \label{eq:ennetilde}
 \widetilde{N}_{i}(x,y):=(x-y)_i\int_1^\infty\psi(y+\alpha(x-y))\,\alpha^{2}\,d\alpha.
\end{equation}
From the similarity between the two formulas~\eqref{eq:Regina-2}-\eqref{eq:enne}
and~\eqref{eq:cal-B}-\eqref{eq:ennetilde} it is clear that many results valid for the
operator $\mathcal{R}$ can be deduced by corresponding ones already proved for the
operator $\mathcal{B}$.

We make some remarks and state some results without complete proofs, since some of the
results are well-known~\cite{Gri1990a,Gri1990b} and detailed proof of some new points (at
least for the companion operator $\mathcal{B}$ which is a right inverse of the divergence) can
be found with full details in the forthcoming paper~\cite{BL2017}.

The first results we have is the  following.
\begin{proposition}
  Let be given $g\in (L^q(\Omega))^3$, for some $q>3$. Then, the following formulas are
  all equivalent to~\eqref{eq:Regina}:
  \begin{equation*}
    \begin{aligned}
      (\mathcal{R}g)^{k}(x)&=-\epsilon_{i j
        k}\int_{\Omega}(x-y)_i\,g^{j}(y)\int_1^\infty\psi(y+\alpha(x-y))\,\alpha(\alpha-1)
      \,d\alpha dy,
      \\
      (\mathcal{R}g)^{k}(x)&= -\epsilon_{i j
        k}\int_{\Omega}\frac{(x-y)_i}{|x-y|^3}\,g^{j}(y)\int_{|x-y|}^\infty\psi\big(y+\xi\frac{x-y}{|x-y|}\big)\,
      \xi(\xi-|x-y|)\,d\xi dy,
      \\
      (\mathcal{R}g)^{k}(x)&= -\epsilon_{i j
        k}\int_{\Omega}\frac{(x-y)_i}{|x-y|^3}\,g^{j}(y)\int_{0}^\infty\psi\big(y+r\frac{x-y}{|x-y|}\big)\,r(r+|x-y|)\,dr
      dy,
      \\
      (\mathcal{R}g)^{k}(x)&= -\epsilon_{i j
        k}\int_{x-\Omega}\frac{x_i}{|x|^3}\,g^{j}(x-z)\int_{0}^\infty\psi\big(y+r\frac{z}{|z|}
      \big)\,r(r+|z|) \,dr dz,
      \\
      (\mathcal{R}g)^{k}(x)&=-\epsilon_{i j
        k}\int_{x-\spt g}\frac{x_i}{|x|^3}\,g^{j}(x-z)\int_{0}^{1+\diam(\Omega)}\psi\big(y+r\frac{z}{|z|}\big)\,r(r+|z|)
      \,dr dz.
    \end{aligned}
  \end{equation*}
%
\end{proposition}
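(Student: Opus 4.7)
The plan is to show that each formula follows from the preceding one by an elementary change of variable in either the outer or the inner integral, supplemented by observations on the supports of $g$ and of $\psi$. The hypothesis $g\in (L^{q}(\Omega))^{3}$ with $q>3$ will be used only to guarantee absolute convergence of the iterated integrals, so that Fubini's theorem and the substitutions are legitimate. Before starting the substitutions I would verify absolute convergence: after the first substitution the numerator $(x-y)_{i}$ together with the factor $|x-y|^{-3}$ produces a kernel of order $|x-y|^{-2}$ (because the inner integral, whose integrand is supported where the argument of $\psi$ lies in $B(0,1)$, is uniformly bounded in $x,y$), and $|x-y|^{-2}$ is locally integrable in $\R^{3}$; combined with $g\in L^{q}$, H\"older's inequality yields a pointwise finite outer integral.

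To pass from \eqref{eq:Regina} to the first displayed identity, substitute $y=x-z$: then $dz=dy$, and since $g$ is extended by zero outside $\Omega$ the outer integration reduces to $\Omega$, while $x-z+tz$ becomes $y+\alpha(x-y)$ after relabeling $t=\alpha$. For the second identity, substitute $\xi=\alpha|x-y|$ in the inner integral; the lower limit $\alpha=1$ becomes $\xi=|x-y|$, $\alpha(\alpha-1)\,d\alpha$ becomes $\xi(\xi-|x-y|)\,d\xi/|x-y|^{3}$, and the factor $|x-y|^{-3}$ pairs with $(x-y)_{i}$. The third identity is obtained by the shift $r=\xi-|x-y|$: the lower limit becomes $0$ and $\xi(\xi-|x-y|)$ becomes $r(r+|x-y|)$. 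The fourth identity is the outer change of variable $z=x-y$, under which $(x-y)_{i}/|x-y|^{3}$ becomes $z_{i}/|z|^{3}$ and the outer domain becomes $x-\Omega$.

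For the last identity two observations remain. First, since $g^{j}$ vanishes on $\Omega\setminus\spt g$, the outer integration contracts to $x-\spt g$. Second, because $\psi$ is supported in $B(0,1)$ and any $y\in\Omega$ satisfies $\lvert y\rvert\le\diam(\Omega)$ (using $0\in\overline{B}\subset\Omega$), one has a lower bound on $\lvert y+r\,z/\lvert z\rvert\rvert$ of the form $r-\diam(\Omega)$, which is at least $1$ as soon as $r\ge 1+\diam(\Omega)$; hence $\psi$ vanishes for such $r$ and the upper limit of the inner integral is truncated to $1+\diam(\Omega)$. The hard part of the argument is the bookkeeping of absolute convergence once the explicit factor $|x-y|^{-3}$ appears in formulas two through four; once one notices that this factor is always compensated by $(x-y)_{i}$ and by the bounded effective range of the inner integral, the remaining manipulations reduce to routine one- and three-dimensional substitutions.
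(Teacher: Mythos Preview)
Your proof is correct and takes essentially the same approach as the paper, which merely states that the equivalences follow by changes of variables in the integrals and that well-definedness for $g\in (L^{q}(\Omega))^{3}$, $q>3$, follows from the kernel bound $|N_{i}(x,y)|\leq C|x-y|^{-2}$ (their Lemma on $N_{i}$). You have simply spelled out each substitution and the H\"older/integrability argument in detail.
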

\begin{proof}
  The equivalence of the integral formulas is simply proved by applying the formula of a
  change of variables for integrals. The
  fact that the formula is well-defined for $g\in (L^{q}(\Omega))^{3}$ follows from
  Lemma~\ref{lem:first-lemma} below, which implies that for all $x\in \Omega$ it holds $N(x,y)\in
  L^{p}_{loc}(\R^{3},dy)$, for all $p<3$.
\end{proof}
\begin{remark}
  The above result shows that for $g\in (C(\overline{\Omega}))^{3}\subset
  (L^{\infty}(\Omega))^{3}\subset (L^{q}(\Omega))^{3}$, for all $q\geq1$ since $\Omega$ is
  bounded, the representation formula can be applied directly to the vector valued
  function $g$ itself, and not only through smooth approximating sequences.
\end{remark}
The proof of further properties of the operator $\mathcal{R}$ is based on the following
two lemmas.
\begin{lemma}
  \label{lem:first-lemma}
  We can rewrite the functions $N_i(x,y)$ as follows
  \begin{equation*}
    N_{i}(x,y):=\frac{(x-y)_i}{|x-y|^3}\int_{|x-y|}^\infty\psi\big(y+\xi\frac{x-y}{|x-y|}\big)\,\xi(\xi-|x-y|)\,d\xi
    \qquad   \forall\,x\not=y,\ x,y\in\R^{3},
\end{equation*}
and then for $i=1,2,3$
\begin{equation*}
  \exists\, C=C(\diam(\Omega),\|\psi\|_{\infty}):\quad |  N_{i}(x,y)|\leq\frac{C}{|x-y|^{2}}\qquad
  \forall\,x\not=y,\ x,y\in\R^{3}.
\end{equation*}
\end{lemma}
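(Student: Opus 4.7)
My plan is to establish the two claims separately, the first by a direct change of variable and the second by exploiting the compact support of $\psi$. For the alternative representation of $N_i$, I would substitute $\xi=\alpha|x-y|$ in the defining integral \eqref{eq:enne}: this converts $d\alpha=d\xi/|x-y|$, $\alpha(\alpha-1)=\xi(\xi-|x-y|)/|x-y|^2$, $y+\alpha(x-y)=y+\xi(x-y)/|x-y|$, and moves the lower endpoint $\alpha=1$ to $\xi=|x-y|$. Collecting the three resulting factors of $|x-y|^{-1}$ together with the leading $(x-y)_i$ yields exactly the displayed formula.

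For the pointwise bound the key input is $\spt\psi\subset B(0,1)$, which simultaneously controls the length of the set where the integrand is non-zero and the size of $\xi$ on that set. Setting $e:=(x-y)/|x-y|$, the $\xi$-integrand vanishes unless $y+\xi e\in B(0,1)$; the set $E$ of admissible $\xi\ge|x-y|$ is therefore the intersection of a half-line with the open unit ball, hence an interval of length at most $2$. On $E$ the triangle inequality gives
\[
\xi=|y+\xi e-y|\le|y+\xi e|+|y|<1+|y|,
\]
and since $N_i$ will be used only for $y\in\Omega$ (the datum $g$ being extended by zero outside $\Omega$) while $0\in B\subset\Omega$, one has $|y|\le\diam(\Omega)$ and therefore $\xi\le 1+\diam(\Omega)$ throughout $E$. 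Consequently $\xi(\xi-|x-y|)\le\xi^{2}\le(1+\diam(\Omega))^{2}$ on $E$, the $\xi$-integral is bounded by $2\|\psi\|_{\infty}(1+\diam(\Omega))^{2}$, and multiplying by $|(x-y)_i|/|x-y|^3\le 1/|x-y|^2$ gives the stated estimate with $C=2\|\psi\|_{\infty}(1+\diam(\Omega))^{2}$.

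The only delicate bookkeeping is recognising that both the length control of $E$ (a chord of the unit ball) and the upper bound on $\xi$ (via $|y|\le\diam(\Omega)$) originate from the same source, namely the support of $\psi$ combined with the boundedness of $\Omega$, and that the right variable to invoke is $|y|$ rather than $|x|$ or $|x-y|$. I do not anticipate any substantive technical obstacle beyond this.
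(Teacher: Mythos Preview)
Your proposal is correct and follows essentially the same approach as the paper: the change of variable $\xi=\alpha|x-y|$ for the first claim, and the observation that $\psi(y+\xi(x-y)/|x-y|)$ vanishes for $\xi>1+\diam(\Omega)$ (when $y\in\Omega$) for the second. The paper's proof is terse---it merely states that the integrand is bounded and supported on a compact $\xi$-interval---whereas you supply the explicit constant and correctly flag that the implicit restriction to $y\in\Omega$ is what makes the $\diam(\Omega)$-dependence work.
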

\begin{proof}
  The proof is based on the observation that the function $\psi$ is bounded and
  $\psi\big(y+\xi\frac{x-y}{|x-y|}\big)$ is zero for all $\xi\in \R^+$ such that
  $\xi>1+\diam\,\Omega$, and therefore the integral of a continuous function over a
  compact set is bounded.
\end{proof}
A very basic fact, which has not been highlighted in the literature is that the following
result is valid.
\begin{lemma}
\label{ref:lemma-second}
The functions $N_{i}(x,y)$ for $i=1,2,3$ are such that
  \begin{equation*}
    N_{i}(x,y)\equiv 0\qquad\forall \,y\in \Omega\quad \text{and}\quad\forall\,x\in \R^{3}\backslash \Omega.
  \end{equation*}
\end{lemma}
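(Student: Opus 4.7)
The plan is to start from the defining formula
\[
N_i(x,y) = (x-y)_i \int_1^\infty \psi\bigl(y+\alpha(x-y)\bigr)\,\alpha(\alpha-1)\,d\alpha,
\]
and show that, under the hypothesis $y\in\Omega$, $x\notin\Omega$, the integrand vanishes identically in $\alpha\in[1,\infty)$. Since $\psi$ is supported in $B(0,1)$, it suffices to verify that
\[
y+\alpha(x-y)\notin B(0,1)\qquad\text{for every } \alpha\ge 1.
\]

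I would argue by contradiction. Suppose that for some $\alpha\ge 1$ the point $z:=y+\alpha(x-y)$ lies in $B(0,1)$. A direct algebraic manipulation gives
\[
x = \Bigl(1-\tfrac{1}{\alpha}\Bigr)y + \tfrac{1}{\alpha}\,z,
\]
so that $x$ is a convex combination of $y$ and $z$ with weight $1/\alpha\in(0,1]$. Now $z\in B(0,1)\subset\overline{B}$ by assumption, and $y\in\Omega$. The star-shapedness of $\Omega$ with respect to $\overline{B}$ means precisely that the whole segment joining any point of $\Omega$ to any point of $\overline{B}$ is contained in $\Omega$; consequently $x\in\Omega$, contradicting the hypothesis $x\in\R^{3}\setminus\Omega$. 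Hence $\psi\bigl(y+\alpha(x-y)\bigr)=0$ for every $\alpha\ge 1$, and the integral defining $N_i(x,y)$ vanishes.

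The only two minor points that need a careful word are: the endpoint case $\alpha=1$ (where $x=z\in B(0,1)\subset\overline{B}\subset\Omega$, still contradicting $x\notin\Omega$), and the boundary case $x\in\partial\Omega$, which is covered by the same argument since $\Omega$ being open with $\overline{B}\subset\Omega$ forces $\partial\Omega\cap\overline{B}=\emptyset$, so a convex combination of a point of $\Omega$ and a point of $B(0,1)$ with positive weights lies in $\Omega$ and in particular off $\partial\Omega$.

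I do not expect a real obstacle: this is a clean geometric verification whose only subtlety is respecting the open/closed distinction and invoking the star-shapedness precisely with $\overline{B}$ as the base set. No analytic estimate is needed — the conclusion $N_i(x,y)\equiv 0$ is a pointwise consequence of the support condition on $\psi$ combined with the shape of $\Omega$.
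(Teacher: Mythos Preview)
Your argument is correct and follows essentially the same route as the paper's proof: both show that the integrand vanishes by observing that if $y\in\Omega$ and the argument of $\psi$ lies in $B(0,1)\subset\overline{B}$, then star-shapedness forces $x$ (as a convex combination) to lie in $\Omega$. The paper states the contrapositive in one line (``if $x\notin\Omega$ and $\psi(\cdots)\neq 0$ for some $\xi>|x-y|$, then $y\notin\Omega$'') without spelling out the convex-combination step, whereas you make that step explicit; the content is the same.
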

\begin{proof}
  It is easy to check that if $x\notin\Omega$ and
  $\psi\left(y+\xi\frac{x-y}{|x-y|}\right)\neq 0$ holds true for some $\xi>|x-y|$, then
  $y\notin\Omega$. From this fact the thesis follows directly.
\end{proof}
In fact, Lemma~\ref{ref:lemma-second} implies that the homogeneous Dirichlet boundary
conditions are satisfied for any function for which~\eqref{eq:Regina} makes sense.

Immediate corollaries of the above two lemmas, are the following propositions
\begin{proposition}
  If $g\in (L^{q}(\Omega))^{3}$, for some $q>3$, then $\mathcal{R}g(x)$ is well defined for all
  $x\in \R^{3}$ and moreover
  \begin{equation*}
    (\mathcal{R}g)(x)\equiv0\qquad\forall x\in \R^{3}\backslash \Omega.
  \end{equation*}
  In particular, we have that $\mathcal{R}g(x)=0$ for all $x\in \partial\Omega$.
\end{proposition}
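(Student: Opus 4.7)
The plan is to read off the proposition as a direct corollary of the two preceding lemmas. First, to establish that $\mathcal{R}g(x)$ is well-defined pointwise for every $x\in\R^3$, I would use the representation in compact form~\eqref{eq:Regina-2} together with the pointwise bound $|N_i(x,y)|\leq C|x-y|^{-2}$ from Lemma~\ref{lem:first-lemma}. An application of H\"older's inequality with conjugate exponents $q$ and $q'=q/(q-1)$ gives
\begin{equation*}
  |(\mathcal{R}g)^{k}(x)|\leq C\,\|g\|_{L^{q}(\Omega)}\,\||x-\cdot|^{-2}\|_{L^{q'}(\Omega)}.
\end{equation*}
Since $q>3$ we have $q'<3/2$, whence $2q'<3$, so the singular kernel $y\mapsto|x-y|^{-2}$ lies in $L^{q'}(\Omega)$ (recall $\Omega$ is bounded), uniformly in $x$ ranging over any bounded subset of $\R^3$. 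This yields absolute convergence of the integral for every $x\in\R^{3}$.

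Second, for the identical vanishing of $\mathcal{R}g$ outside $\Omega$, I would invoke Lemma~\ref{ref:lemma-second} directly: when $x\in\R^{3}\setminus\Omega$, the kernel $N_{i}(x,y)$ is zero for every $y\in\Omega$, so the integrand of~\eqref{eq:Regina-2} vanishes identically in $y$ over the domain of integration. Therefore $(\mathcal{R}g)^{k}(x)=0$ for all $x\in\R^{3}\setminus\Omega$. Since $\Omega$ is open, $\partial\Omega\subset\R^{3}\setminus\Omega$, so in particular $(\mathcal{R}g)(x)=0$ for all $x\in\partial\Omega$, which is the homogeneous Dirichlet condition of the original problem~\eqref{eq:curl}.

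There is essentially no obstacle here, since all the substantive content has been placed into the two lemmas. The only point that deserves a word is the strict inequality $q>3$: it is exactly what is needed for $2q'<3$ and hence for local integrability of $|x-y|^{-2}$ to the power $q'$; the proposition would fail at the endpoint $q=3$ with this argument. Everything else is a book-keeping exercise combining~\eqref{eq:Regina-2},~\eqref{eq:enne}, Lemma~\ref{lem:first-lemma}, and Lemma~\ref{ref:lemma-second}.
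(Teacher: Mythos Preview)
Your proposal is correct and follows exactly the approach indicated in the paper, which simply declares the proposition an ``immediate corollary of the above two lemmas'' without further detail. You have accurately fleshed out what the paper leaves implicit: the well-definedness via the bound of Lemma~\ref{lem:first-lemma} together with H\"older (equivalently, $N_i(x,\cdot)\in L^{p}_{\mathrm{loc}}$ for $p<3$, as noted in the proof of Proposition~1), and the vanishing on $\R^3\setminus\Omega$ via Lemma~\ref{ref:lemma-second}.
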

\begin{proposition}
  If $g\in (L^{q}(\Omega))^{3}$, for some $q>3$, then 
  \begin{equation*}
    |\mathcal{R}g(x)|\leq c\|g\|_{L^{q}(\Omega)}\qquad \forall\,x\in\R^{3},
\end{equation*}
where $c$ depends only on $\psi$, $\diam(\Omega)$, and $q$. 
\end{proposition}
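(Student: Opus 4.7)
The plan is to combine the compact representation~\eqref{eq:Regina-2}, namely
\begin{equation*}
(\mathcal{R}g)^{k}(x)=-\epsilon_{ijk}\int_{\Omega}g^{j}(y)\,N_{i}(x,y)\,dy,
\end{equation*}
with the pointwise kernel bound $|N_{i}(x,y)|\leq C|x-y|^{-2}$ from Lemma~\ref{lem:first-lemma}, and to close the argument with H\"older's inequality.

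First I would reduce to $x\in\overline{\Omega}$: by the previous proposition $(\mathcal{R}g)(x)\equiv 0$ for $x\in\R^{3}\setminus\Omega$, so the inequality is immediate there. For $x\in\overline{\Omega}$, the triangle inequality together with H\"older's inequality with conjugate exponent $q'=q/(q-1)$ and the Lemma's kernel bound yield, for each component $k=1,2,3$,
\begin{equation*}
|(\mathcal{R}g)^{k}(x)|\leq C\,\|g\|_{L^{q}(\Omega)}\left(\int_{\Omega}\frac{dy}{|x-y|^{2q'}}\right)^{1/q'},
\end{equation*}
with a constant $C$ that depends only on $\psi$ and $\diam(\Omega)$ through Lemma~\ref{lem:first-lemma}.

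To finish, I would estimate the radial integral uniformly in $x$. Since $x\in\overline{\Omega}$, one has $\Omega\subset B(x,\diam(\Omega))$, and passing to spherical coordinates centered at $x$ gives
\begin{equation*}
\int_{\Omega}\frac{dy}{|x-y|^{2q'}}\leq 4\pi\int_{0}^{\diam(\Omega)}r^{2-2q'}\,dr.
\end{equation*}
This integral is finite precisely when $2q'<3$, i.e.\ $q'<3/2$, which is exactly equivalent to the hypothesis $q>3$; its explicit value depends only on $\diam(\Omega)$ and $q$, so absorbing everything into a single constant yields the stated bound.

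There is no real obstacle in the argument: the only conceptual point is that the threshold $q>3$ is sharp for the local integrability in $L^{q'}$ of the singular kernel $|x-y|^{-2}$ in $\R^{3}$. This is the same mechanism implicitly used in the preceding proposition to guarantee pointwise well-definedness of $\mathcal{R}g$, so once Lemma~\ref{lem:first-lemma} is in hand the proof is essentially a one-line application of H\"older.
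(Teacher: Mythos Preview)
Your argument is correct and is precisely the one the paper intends: the proposition is stated there as an immediate corollary of Lemma~\ref{lem:first-lemma} (the bound $|N_i(x,y)|\le C|x-y|^{-2}$) and Lemma~\ref{ref:lemma-second} (vanishing of the kernel for $x\notin\Omega$, which you invoke via the preceding proposition), and your H\"older step with the radial integral is exactly how one cashes this in. There is nothing to add.
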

\begin{proposition}
  If $g\in (C^{\infty}_{0}(\Omega))^{3}$, then $\mathcal{R}g\in  (C^{\infty}_{0}(\Omega))^{3}$.
\end{proposition}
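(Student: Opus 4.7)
The plan is to prove two claims separately: smoothness of $\mathcal{R}g$ on all of $\R^{3}$, and compactness of its support inside $\Omega$. For smoothness, the kernel $N_{i}(x,y)$ carries a $|x-y|^{-2}$ singularity, but this is only an artefact of the chosen variables. I would first perform the change of variable $z=x-y$, then switch to polar coordinates $z=r\omega$, and finally substitute $t=(\alpha-1)r$ in the inner integral of $N_{i}$. The factor $r^{-3}$ produced by these substitutions cancels exactly the volume element $r^{2}\,dr\,d\sigma(\omega)$, yielding the non-singular representation
\begin{equation*}
(\mathcal{R}g)^{k}(x)=-\epsilon_{ijk}\int_{S^{2}}\omega_{i}\int_{0}^{\infty}\!\!\int_{0}^{\infty}g^{j}(x-r\omega)\,\psi(x+t\omega)\,(t+r)\,t\,dt\,dr\,d\sigma(\omega).
\end{equation*}
Since $g\in(C^{\infty}_{0}(\Omega))^{3}$ and $\psi\in C^{\infty}_{0}(\R^{3})$, the integrand is smooth jointly in all of its arguments, and the ranges of $r,t$ that carry non-zero contribution are bounded locally uniformly in $x$. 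Differentiating under the integral sign to any order then gives $\mathcal{R}g\in(C^{\infty}(\R^{3}))^{3}$.

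For compact support, if $(\mathcal{R}g)(x)\neq 0$ then, by~\eqref{eq:Regina}, there must exist $y\in\spt g$ and $\alpha\geq 1$ such that $p:=y+\alpha(x-y)\in\spt\psi$; equivalently, $x=(1-1/\alpha)y+(1/\alpha)p$. Hence
\begin{equation*}
\spt(\mathcal{R}g)\subseteq S:=\{(1-s)y+sp:\ y\in\spt g,\ p\in\spt\psi,\ s\in[0,1]\}.
\end{equation*}
The set $S$ is the continuous image of the compact product $\spt g\times\spt\psi\times[0,1]$, so it is itself compact. Moreover, for each $y\in\spt g\subset\Omega$ and $p\in\spt\psi\subset\overline{B}$, star-shapedness of $\Omega$ with respect to $\overline{B}$ forces $[y,p]\subset\Omega$; hence $S\subset\Omega$. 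Thus $\spt(\mathcal{R}g)$ is a compact subset of $\Omega$.

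The only substantive step is the first one: recognising the precise cancellation of the $|x-y|^{-2}$ singularity against the Jacobian in polar coordinates, so that the integrand becomes smooth with compactly supported dependence on the integration variables. After that, smoothness follows by routine differentiation under the integral sign, and the compact-support statement is an elementary geometric consequence of the star-shaped hypothesis on $\Omega$.
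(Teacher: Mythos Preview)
Your proof is correct. The paper itself gives essentially no argument beyond the one-line remark that the result ``is readily obtained by differentiating under the integral sign'' together with references to Griesinger and Borchers--Sohr, so your treatment is considerably more explicit. The underlying idea is the same, but you add a genuinely helpful step: by passing to polar coordinates $z=r\omega$ and then substituting $t=(\alpha-1)r$, you absorb the $|x-y|^{-2}$ singularity of $N_i$ into the Jacobian and obtain an integrand that is jointly smooth in $(x,r,t,\omega)$ with compactly supported dependence on $(r,t)$, locally uniformly in $x$. This makes differentiation under the integral sign immediate, whereas working directly with~\eqref{eq:Regina} or~\eqref{eq:rot-1} would require checking at each stage that the singularity produced by $\partial_{x_m}$ acting on the kernel remains integrable. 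Your compact-support argument, realising $\spt(\mathcal{R}g)$ as a subset of the compact set $S=\{(1-s)y+sp:\ y\in\spt g,\ p\in\spt\psi,\ s\in[0,1]\}\subset\Omega$, is also sharper than what the paper records: the surrounding lemmas only show that $N_i(x,y)$ vanishes for $x\notin\Omega$, which yields $\mathcal{R}g=0$ on $\R^3\setminus\Omega$ but not, by itself, compact support strictly inside $\Omega$.
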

This latter result is readily obtained by differentiating under the integral sign, see
also Griesinger~\cite{Gri1990a,Gri1990b} and Borchers and Sohr~\cite{BS1990}, since under
the above hypotheses all the calculations are completely justified.

Moreover we have also the following result
\begin{proposition}
\label{prop:proposition4}
  Let $g\in (L^{q}(\Omega))^{3}$, for some $q>3$. Then, $\mathcal{R}g\in
  (C(\overline{\Omega}))^{3}$, with $\mathcal{R}g_{|\partial\Omega}=0$.
\end{proposition}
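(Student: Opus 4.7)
The plan is to obtain $\mathcal{R}g\in(C(\overline{\Omega}))^{3}$ by a routine density argument that packages together the three preceding results in this section: the pointwise kernel bound $|N_{i}(x,y)|\leq C|x-y|^{-2}$ from Lemma~\ref{lem:first-lemma}, the fact that $\mathcal{R}$ maps $(C^{\infty}_{0}(\Omega))^{3}$ into itself, and the uniform $L^{q}$--$L^{\infty}$ estimate $|\mathcal{R}g(x)|\leq c\|g\|_{L^{q}}$ valid for all $x\in\R^{3}$, established in the immediately preceding proposition.

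First, since $q<\infty$ and $\Omega$ is bounded, I can choose a sequence $g_{n}\in(C^{\infty}_{0}(\Omega))^{3}$ with $g_{n}\to g$ in $(L^{q}(\Omega))^{3}$. By the smoothness proposition above, $\mathcal{R}g_{n}\in(C^{\infty}_{0}(\Omega))^{3}$ for each $n$, so each approximant is continuous on the whole of $\R^{3}$ and vanishes outside $\Omega$, in particular on $\partial\Omega$.

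Next, by linearity of $\mathcal{R}$ and the $L^{q}$--$L^{\infty}$ bound applied to the difference $g-g_{n}$,
\begin{equation*}
\|\mathcal{R}g-\mathcal{R}g_{n}\|_{L^{\infty}(\R^{3})}\leq c\,\|g-g_{n}\|_{L^{q}(\Omega)}\longrightarrow 0.
\end{equation*}
Thus $\mathcal{R}g$ is a uniform limit on $\R^{3}$ of continuous functions, and hence is itself continuous on $\R^{3}$; in particular $\mathcal{R}g\in(C(\overline{\Omega}))^{3}$. The boundary trace $\mathcal{R}g|_{\partial\Omega}=0$ then follows either by passing to the uniform limit in $\mathcal{R}g_{n}|_{\partial\Omega}=0$, or, more directly, from the first ``immediate corollary'' proposition above: since $\Omega$ is open one has $\partial\Omega\subset\R^{3}\setminus\Omega$, and on that set $\mathcal{R}g$ vanishes identically.

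I do not foresee a real obstacle, since the analytic work has already been done by Lemma~\ref{lem:first-lemma} (via the kernel estimate, which in turn uses $q>3\Rightarrow q'<3/2$ to make $|x-y|^{-2}$ locally $L^{q'}$) and by the preceding $L^{q}$--$L^{\infty}$ proposition. The only point that one should be mildly careful about is the uniformity of the constant in that bound across $x\in\R^{3}$: it is precisely this uniformity — not merely $L^{q}$ convergence of $\mathcal{R}g_{n}$ to $\mathcal{R}g$ — that is needed to transfer continuity to the limit up to and including boundary points.
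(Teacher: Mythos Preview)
Your argument is correct and follows exactly the route sketched in the paper's own proof: approximate $g$ in $L^{q}$ by a sequence in $(C^{\infty}_{0}(\Omega))^{3}$, use that $\mathcal{R}$ carries this sequence into $(C^{\infty}_{0}(\Omega))^{3}$, and invoke the uniform $L^{q}\!\to\!L^{\infty}$ bound to conclude uniform convergence of $\mathcal{R}g_{n}$ to $\mathcal{R}g$ on all of $\R^{3}$. Your added remarks on why the constant must be uniform in $x$ and on the two ways to read off the boundary condition are accurate elaborations of the same idea.
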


\begin{proof}
  The proof is obtained by approximating in $L^{q}(\Omega)$ the vector $g$ with a sequence
  $\{g_{m}\}_{m\in\N}\subset (C^{\infty}_{0}(\Omega))^{3}$ and by observing that the
  resulting sequence of vectors $\{\mathcal{R}g_{m}\}_{m\in\N}$ belongs to $\subset
  (C^{\infty}_{0}(\Omega))^{3}$ and it converges uniformly, as $m\to+\infty$, to the
  vector field $\mathcal{R}g$, in the whole space $\R^{3}$.
\end{proof}
\subsection{On the validity of the representation}
Once we have shown that $\mathcal{R}g$ vanishes at the boundary, it is relevant now to
show that the operator $\mathcal{R}$ represents a right inverse of the $\curl$ operator. The
proof we give here is different from that provided in~\cite{Gri1990b,Gri1990a}, where the
$L^{p}$-approach is based on a non-continuous truncation of the kernel and on the analysis
of the surface integral deriving from integration by parts.

In this section the open set $\Omega$ is star-shaped with respect to
$\overline{B(0,1)}$. We show now that the formula of representation provides a solution to
the curl equation.  This follows by using a very classical tool (introduced by
Korn~\cite{KR1}) of truncating in a smooth way the singularity of the kernel. Then, we
show that we can work on smooth functions, and only at the end we pass to the limit,
proving properties which can be derived by the uniform convergence of the approximating
sequence.

To this end, let be given a monotone non-decreasing $\eta\in C^1(\R^+)$ such that
\begin{equation*}
  \eta(s)=\left\{
    \begin{aligned}
      &      0\qquad \text{for }s\in[0,1],
      \\
      &1\qquad \text{for }s\in[2,+\infty[,
    \end{aligned}
\right.
\end{equation*}
and $|\eta'(s)|\leq2$ for all $s\in\R^+$. 

Then, let us fix $x\in \Omega$ and let be given
$0<\epsilon<\textrm{dist}(x,\partial\Omega)$. We start considering the representation
formula for $g\in (C^\infty_0(\Omega))^3$ and we define the operator $\mathcal{R}^{\epsilon}$
(which is the ``$\epsilon$-regularized'' version of the operator $\mathcal{R}$), obtained
by truncating in a smooth way the kernel $N_i(x,y)$ near its singularity at $x=y$:
\begin{equation*}
  (\mathcal{R}^{\epsilon}
  g)^{k}(x):=-\epsilon_{i j k}\int_{\Omega}(x-y)_i\,g^{j}(y)\int_1^\infty\psi(y+\alpha(x-y))\,\alpha(\alpha-1)\,  
  \eta\Big(\frac{|x-y|}{\epsilon}\Big)\,d\alpha dy.
\end{equation*}
This approximation allows us to freely perform all needed manipulations, since all terms
in the above integral are smooth and bounded over $\Omega$.  Before performing the various
derivatives we observe that the following crucial identity is valid.
  \begin{lemma}
    \label{lem:change-derivatives}
    By evaluating the partial derivatives we get, for all $j=1,2,3$ 
    \begin{equation}
      \label{eq:integrate_by_parts}
      \begin{aligned}
        &\frac{\partial}{\partial
          x_j}\psi(y+\alpha(x-y))=\partial_j\psi(y+\alpha(x-y))\,\alpha,
        \\
        &\text{and}
        \\
        &\frac{\partial}{\partial
          y_j}\psi(y+\alpha(x-y))=\partial_j\psi(y+\alpha(x-y))\,(1-\alpha),
      \end{aligned}
    \end{equation}
    where, to simplify the notation, here and in the sequel we use the symbol $\partial_j
    \psi$ to denote the derivative with respect to the $j$-th argument of the function
    $\psi$ that is
    \begin{equation*}
      \partial_j\psi(y+\alpha(x-y)):=\partial_j\psi_{\,|y+\alpha(x-y)}.
    \end{equation*}
%
%
  Then, for any fixed $x,y\in \Omega,\;x\neq y$ and for all $i,j=1,2,3$ it follows 
  \begin{equation*}
    \partial_{x_{j}}N_{i}(x,y)\;=\;
    (x_{i}-y_{i})\int_{1}^{\infty}\partial_{j}\psi(y+\alpha(x-y))\,\alpha(\alpha-1)\,d\alpha
    \;-\;\partial_{y_{j}}N_{i}(x,y).
  \end{equation*}
\end{lemma}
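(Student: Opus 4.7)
The plan is to compute $\partial_{x_j}N_i$ and $\partial_{y_j}N_i$ by direct differentiation under the integral sign, using the two chain-rule identities in~\eqref{eq:integrate_by_parts} as the main input, and then to observe that the sum of the two derivatives collapses because $\alpha+(1-\alpha)=1$.

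First I would verify the identities~\eqref{eq:integrate_by_parts}: these are a direct application of the chain rule to the composite $\psi(y+\alpha(x-y))$. Its dependence on $x_j$ goes through the scalar $y_j+\alpha(x_j-y_j)$, whose $x_j$-derivative is $\alpha$; viewed the other way as $(1-\alpha)y_j+\alpha x_j$, the same quantity has $y_j$-derivative equal to $(1-\alpha)$, which gives the second identity.

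Next I would differentiate $N_i(x,y)=(x-y)_i\int_1^\infty\psi(y+\alpha(x-y))\alpha(\alpha-1)\,d\alpha$ in $x_j$ and in $y_j$ separately. Differentiation under the integral sign is legitimate at any fixed $(x,y)$ with $x\neq y$: since $\psi\in C^\infty_0(\R^3)$ is supported in $B(0,1)$, the integrand and its first derivatives in $x$ and $y$ vanish as soon as $\alpha|x-y|>1+|y|$, so we are effectively integrating a smooth bounded function over a bounded $\alpha$-interval, uniformly on a neighborhood of $(x,y)$. By the Leibniz rule combined with~\eqref{eq:integrate_by_parts},
\begin{align*}
\partial_{x_j}N_i(x,y)&=\delta_{ij}\!\int_1^\infty\!\psi(y+\alpha(x-y))\alpha(\alpha-1)\,d\alpha+(x_i-y_i)\!\int_1^\infty\!\alpha\,\partial_j\psi(y+\alpha(x-y))\alpha(\alpha-1)\,d\alpha,\\
\partial_{y_j}N_i(x,y)&=-\delta_{ij}\!\int_1^\infty\!\psi(y+\alpha(x-y))\alpha(\alpha-1)\,d\alpha+(x_i-y_i)\!\int_1^\infty\!(1-\alpha)\partial_j\psi(y+\alpha(x-y))\alpha(\alpha-1)\,d\alpha.
\end{align*}
Adding the two expressions, the $\delta_{ij}$ contributions cancel and the coefficient in front of $\partial_j\psi$ inside the integral becomes $\alpha+(1-\alpha)=1$; a simple rearrangement then gives exactly the identity stated in the lemma.

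There is no genuine obstacle here: once the differentiation under the integral is justified (itself immediate from the compact support of $\psi$), the claim is a one-line chain-and-product-rule computation. The real content of the lemma is the clean structural consequence it makes visible, namely that $\partial_{x_j}N_i$ and $-\partial_{y_j}N_i$ differ by a single term linear in $\nabla\psi$, with no surviving Kronecker-delta contribution. This is precisely the identity that will later allow one to trade an $x$-derivative on the singular kernel for a $y$-derivative, opening the way to an integration by parts against the datum $g$ in the representation formula~\eqref{eq:Regina-2}.
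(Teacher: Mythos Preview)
Your proof is correct and follows essentially the same approach as the paper: the paper justifies differentiation under the integral by the compact support of $\psi$ (noting $|y+\alpha(x-y)|\geq 1$ once $\alpha\geq(1+|y|)/|x-y|$) and then appeals to a direct chain-rule computation, which you have simply written out in full. The explicit display of both derivatives and the cancellation of the $\delta_{ij}$ terms is exactly the ``direct calculation'' the paper leaves to the reader.
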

\begin{proof}
  Since $\left|y+\alpha(x-y)\right|\geq 1$ for $\alpha\geq(1+|y|)/|x-y|$ the integrand is
  bounded on a compact subset of $\R$ and we can differentiate under the sign of
  integral. By using Lemma~\ref{lem:change-derivatives} we obtain by direct calculations
  the proof.
\end{proof}
We now show  the following result.
\begin{proposition}
\label{prop:take-derivatives}
Let  be given a scalar $\psi\in C^{\infty}_{0}(\R^{3})$ such that $\int_{\R^{3}}\psi(y)\,dy=1$ and
$\spt\psi\subset B(0,1)$. Let be given $g\in (C(\overline{\Omega}))^{3}$ with $\dive g\in
L^{q}(\Omega)$, for some $q>3$. Then,
\begin{equation*}
  \lim_{\epsilon\to0}\curl (\mathcal{R}^{\epsilon} g)(x)=g(x)+\mathcal{B}[\dive g]\qquad  \forall\,x\in \Omega,
\end{equation*}
where $\mathcal{B}$ is the Bogovski\u\i{} operator, which gives a solution of the
divergence equation with zero boundary conditions and which is defined by
formula~\eqref{eq:cal-B}.
\end{proposition}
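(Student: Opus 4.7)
My plan is to establish the identity first for $g\in (C^\infty_0(\Omega))^3$, where every differentiation under the integral and every integration by parts is rigorous, and then extend by density. For the approximation step one constructs $g_n\in (C^\infty_0(\Omega))^3$ with $g_n\to g$ uniformly on compact subsets of $\Omega$ and $\dive g_n\to \dive g$ in $L^q(\Omega)$; both sides of the claimed identity then pass to the limit (for fixed $\epsilon>0$ and $x\in\Omega$) by continuity, since the kernel of $\curl\mathcal{R}^{\epsilon}$ is smooth and $\mathcal{B}[\,\cdot\,](x)$ is a bounded linear functional on $L^q(\Omega)$. Under the assumption $g\in (C^\infty_0(\Omega))^3$, expanding $(\curl v)^\ell=\epsilon_{\ell mk}\partial_{x_m}v^k$ and using $\epsilon_{\ell mk}\epsilon_{ijk}=\delta_{\ell i}\delta_{mj}-\delta_{\ell j}\delta_{mi}$ gives the starting decomposition
\[
(\curl\mathcal{R}^{\epsilon}g)^{\ell}(x)=-\int_\Omega g^{m}(y)\,\partial_{x_m}N_{\ell}^{\epsilon}(x,y)\,dy+\int_\Omega g^{\ell}(y)\,\partial_{x_m}N_{m}^{\epsilon}(x,y)\,dy.
\]

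For the first integral I would invoke Lemma~\ref{lem:change-derivatives}, which remains valid for the regularized kernel because $\partial_{x_j}\eta(|x-y|/\epsilon)=-\partial_{y_j}\eta(|x-y|/\epsilon)$ and the two $\eta$-derivative contributions cancel. Writing $\partial_{x_m}N_{\ell}^{\epsilon}=Q_{\ell m}^{\epsilon}-\partial_{y_m}N_{\ell}^{\epsilon}$, with $Q_{\ell m}^{\epsilon}(x,y):=(x-y)_\ell\int_1^\infty\partial_m\psi(y+\alpha(x-y))\,\alpha(\alpha-1)\,d\alpha\cdot\eta(|x-y|/\epsilon)$, and then integrating by parts in $y$ (no boundary terms: $g$ is compactly supported and $N^{\epsilon}$ is smooth) produces $-\int g^{m}Q_{\ell m}^{\epsilon}\,dy-\int \dive g\cdot N_{\ell}^{\epsilon}\,dy$. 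Next, the identity $\partial_{y_m}\psi(y+\alpha(x-y))=(1-\alpha)\partial_m\psi$ from Lemma~\ref{lem:change-derivatives} lets one check that $Q_{\ell m}=-(x-y)_\ell\partial_{y_m}P$, where $P(x,y):=\int_1^\infty\psi(y+\alpha(x-y))\,\alpha\,d\alpha$; a second integration by parts then converts $\int g^{m}Q_{\ell m}^{\epsilon}\,dy$ into $-\int \dive g\cdot(x-y)_\ell P\,dy+\int g^{\ell}P\,dy$ (the $\eta'$-pieces generated here are supported on the annulus $\epsilon<|x-y|<2\epsilon$ with bounded integrand of order $\epsilon$, hence vanish in the limit).

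The decisive step is the trace term. A direct computation based on the observation $(x-y)_m\partial_m\psi=\tfrac{d}{d\alpha}\psi(y+\alpha(x-y))$, one integration by parts in $\alpha$, and the chain rule applied to the cutoff yields
\[
\partial_{x_m}N_{m}^{\epsilon}(x,y)=-P(x,y)\,\eta(|x-y|/\epsilon)+\frac{|x-y|}{\epsilon}\,\eta'(|x-y|/\epsilon)\int_1^\infty\psi(y+\alpha(x-y))\,\alpha(\alpha-1)\,d\alpha.
\]
The first summand contributes $-\int g^{\ell}P\,dy$ in the limit and cancels the $+\int g^{\ell}P\,dy$ from the previous step; the second is the \emph{concentrated} piece, and identifying its limit is the main obstacle of the proof. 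For this I would substitute $y=x+\epsilon z$ (so $|z|\in(1,2)$) and then $\beta=\epsilon(\alpha-1)$ inside the $\alpha$-integral: the leading $1/\epsilon^{3}$ behaviour of the inner integral exactly compensates the $\epsilon^{3}$ volume Jacobian, and passing to spherical coordinates on the $z$-annulus together with the normalizations $\int_1^2\eta'(r)\,dr=1$ and $\int_{\R^3}\psi(w)\,dw=1$ delivers precisely $g^{\ell}(x)$. Once this concentration is identified, the remaining non-singular integrals pass to the limit by dominated convergence (justified by $|N_i(x,y)|\le C|x-y|^{-2}\in L^{1}_{\mathrm{loc}}(\R^3)$ from Lemma~\ref{lem:first-lemma}); collecting terms and using the algebraic identity $(x-y)_\ell\int_1^\infty\psi\,\alpha\,d\alpha+N_{\ell}=\widetilde{N}_{\ell}$ repackages the surviving $\dive g$-integrals as $(\mathcal{B}[\dive g])^{\ell}(x)$, yielding the formula of the proposition.
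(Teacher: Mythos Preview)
Your manipulations for $g\in(C^\infty_0(\Omega))^3$ follow essentially the same route as the paper's direct computation (carried out there only in the special case $\dive g=0$, Theorem~\ref{thm:1}): the same contraction $\epsilon_{\ell mk}\epsilon_{ijk}=\delta_{\ell i}\delta_{mj}-\delta_{\ell j}\delta_{mi}$, the same use of Lemma~\ref{lem:change-derivatives} to trade $\partial_x$ for $\partial_y$, the same integration by parts in $\alpha$ via $(x-y)_m\partial_m\psi=\tfrac{d}{d\alpha}\psi$, and the same rescaling $y\mapsto x-\epsilon z$ together with the normalizations $\int_1^2\eta'=1$, $\int\psi=1$ to extract the concentrated value $g^\ell(x)$. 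The organization differs---you keep the two contracted integrals separate, while the paper splits into $A+B+C$ according to which factor the $x$-derivative hits---and there is a sign slip in your second integration by parts (one actually gets $\int g^{m}Q^\epsilon_{\ell m}\,dy=+\int(\dive g)(x-y)_\ell P\,\eta\,dy-\int g^{\ell}P\,\eta\,dy$ modulo $\eta'$-terms), but the ideas coincide.

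The genuine gap is the density step. You assert that one can choose $g_n\in(C^\infty_0(\Omega))^3$ with $g_n\to g$ locally uniformly and $\dive g_n\to\dive g$ in $L^q(\Omega)$, but this is in general impossible: for compactly supported $g_n$ one has $\int_\Omega\dive g_n\,dx=0$ for every $n$, whereas $\int_\Omega\dive g\,dx=\int_{\partial\Omega}g\cdot\nu\,d\sigma$ is not assumed to vanish, and $L^q$-convergence of the divergences would force these means to agree. Even when the total flux is zero, a cutoff $\chi_\delta$ produces the extra term $g\cdot\nabla\chi_\delta$, whose $L^q$-norm is of order $\delta^{1/q-1}\to\infty$ for $q>1$. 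The paper avoids this by working directly with the given $g$ and its pointwise partial derivatives $\partial_i g^i$ (indeed it only writes the argument under the hypotheses of Theorem~\ref{thm:1}): the single integration by parts in $y$ produces $\dive g$ as a whole, and no approximation by compactly supported fields is invoked. If you want to keep an approximation scheme you must use approximants in $C^\infty(\overline\Omega)$---obtained, for instance, by first dilating (exploiting that $\Omega$ is star-shaped) and then mollifying---and carry the resulting boundary integrals through the computation; the compact-support shortcut is not available here.
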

We prove Proposition~\ref{prop:take-derivatives} under an additional hypothesis, which is
mostly interesting for our purposes.
\begin{theorem}
  \label{thm:1}
  Let be given a scalar $\psi\in C^{\infty}_{0}(\R^{3})$ such that $\int_{\R^{3}}\psi(y)\,dy=1$ and
  $\spt\psi\subset B(0,1)$. Let be given $g\in (C(\overline{\Omega}))^3$ such that $\frac{
    \partial  g^{i}(x)}{\partial {x_i}} $ exist for $i=1,2,3$ and $\dive g=0$. Then,
  \begin{equation*}
    \lim_{\epsilon\to0}\curl (\mathcal{R}^{\epsilon} g)(x)=g(x)\qquad \forall\,x\in \Omega.
  \end{equation*}
\end{theorem}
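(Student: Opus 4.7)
The plan is to treat the $\epsilon$-truncation as a regularization that permits differentiation under the integral sign and integration by parts in $y$, and then to pass to the limit $\epsilon\to 0$ once all manipulations have been carried out. The essential algebraic input is Lemma~\ref{lem:change-derivatives}, which trades a derivative in $x$ on the kernel for a derivative in $y$ plus a term where the derivative falls on $\psi$; together with the hypothesis $\dive g=0$, this is what isolates $g(x)$ in the limit.

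First I would compute $\curl(\mathcal{R}^{\epsilon} g)$ componentwise. Since the factor $\eta(|x-y|/\epsilon)$ makes the integrand smooth and bounded in $y$, the operator $\partial_{x_m}$ commutes with the integral, and contracting $\epsilon_{lmk}\epsilon_{ijk}=\delta_{li}\delta_{mj}-\delta_{lj}\delta_{mi}$ yields
\begin{equation*}
  (\curl\mathcal{R}^{\epsilon}g)^{l}(x) = -\int_{\Omega} g^{j}(y)\,\partial_{x_{j}}N_{l}^{\epsilon}(x,y)\,dy + \int_{\Omega} g^{l}(y)\,\partial_{x_{i}}N_{i}^{\epsilon}(x,y)\,dy,
\end{equation*}
where $N_{l}^{\epsilon}$ denotes the truncated kernel. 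I would then apply the identity of Lemma~\ref{lem:change-derivatives} to each $\partial_{x_{j}}N_{l}^{\epsilon}$; it extends to the truncated kernel because $\eta(|x-y|/\epsilon)$ depends only on $|x-y|$, so its $x_j$- and $y_j$-derivatives are opposite and combine correctly under the product rule. This splits every kernel-derivative into a ``direct'' term carrying $\partial_{j}\psi$ and a $-\partial_{y_{j}}N_{l}^{\epsilon}$ term. Next, I would integrate by parts in $y$ on the $\partial_{y_{j}}$ contributions: no boundary terms appear, since $g$ is extended by zero outside $\Omega$ and the integrand is compactly supported in $y$. When reassembled, the derivatives landing on $g$ organize precisely as $\sum_{j}\partial_{y_{j}}g^{j}=\dive g$, which vanishes by hypothesis; the pointwise existence of $\partial g^{i}/\partial x_{i}$ is exactly what legitimizes this step.

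What remains are the ``direct'' terms, schematically of the form
\begin{equation*}
  \int_{\Omega} g^{j}(y)\,(x_{l}-y_{l})\int_{1}^{\infty}\partial_{j}\psi(y+\alpha(x-y))\,\alpha(\alpha-1)\,d\alpha\,\eta\Big(\tfrac{|x-y|}{\epsilon}\Big)\,dy
\end{equation*}
plus an analogous piece from the second integral. The plan is to integrate by parts in $\alpha$, using $\tfrac{d}{d\alpha}\psi(y+\alpha(x-y))=\partial_{j}\psi(y+\alpha(x-y))(x-y)_{j}$, so that the $\alpha$-integral telescopes. Combined with the change of variables $z=y+\alpha(x-y)$ and the normalization $\int_{\R^{3}}\psi=1$, this reduces the remaining expression to $g^{l}(x)$ up to terms that vanish pointwise as $\epsilon\to 0$.

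The main obstacle is the clean passage to the limit. The $y$-integration by parts also differentiates the $\eta$-factor, producing residual contributions supported on the thin annulus $\epsilon\le|x-y|\le 2\epsilon$; by Lemma~\ref{lem:first-lemma} one has $|N_{l}^{\epsilon}(x,y)|\le C|x-y|^{-2}\le C\epsilon^{-2}$ on this set while its volume is $O(\epsilon^{3})$, so these residuals are $O(\epsilon)$ and vanish uniformly in $g\in (C(\overline{\Omega}))^3$. The more delicate point is that $g$ itself is only continuous, so the convergence of the surviving ``direct'' term to $g(x)$ must be argued through the approximate-identity structure of $\psi$ at the point $x$ rather than by differentiating $g$; this is the analogue of the classical statement $\psi_{\epsilon}*g\to g$ at points of continuity, here adapted to the Griesinger kernel.
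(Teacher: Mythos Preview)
Your overall strategy matches the paper's: differentiate $\mathcal{R}^{\epsilon}g$, use the $\epsilon$-identity, exploit Lemma~\ref{lem:change-derivatives}, integrate by parts, and pass to the limit via the rescaling $z=(x-y)/\epsilon$. But there is a genuine gap in the step where you integrate by parts in $y$. After the contraction you have two integrals,
\[
-\int_{\Omega} g^{j}(y)\,\partial_{x_{j}}N_{l}^{\epsilon}(x,y)\,dy
\qquad\text{and}\qquad
\int_{\Omega} g^{l}(y)\,\partial_{x_{i}}N_{i}^{\epsilon}(x,y)\,dy.
\]
Applying Lemma~\ref{lem:change-derivatives} and integrating by parts in $y$ on the \emph{first} integral does move $\partial_{y_{j}}$ onto $g^{j}$ and produces $\sum_{j}\partial_{j}g^{j}=\dive g$. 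On the \emph{second} integral, however, the same manipulation moves $\partial_{y_{i}}$ onto $g^{l}$, summed over $i$: this is the full gradient $\sum_{i}\partial_{i}g^{l}$, not $\dive g$. Under the stated hypotheses only the diagonal derivatives $\partial_{i}g^{i}$ exist, so that step is illegitimate, and the claim that ``the derivatives landing on $g$ organize precisely as $\dive g$'' is false for this piece. The paper avoids exactly this: for the traced term (their $A_{1}$) it does \emph{not} invoke Lemma~\ref{lem:change-derivatives} or any $y$-integration by parts. Instead it uses the contraction $(x-y)_{j}\,\partial_{j}\psi(y+\alpha(x-y))=\tfrac{d}{d\alpha}\psi(y+\alpha(x-y))$ and integrates by parts in $\alpha$ directly, so that no derivative ever touches $g$ in that term. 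Your later mention of $\alpha$-integration by parts is the right tool, but it must \emph{replace} the $y$-IBP on the second integral, not follow it.

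A smaller point: the justification ``no boundary terms appear, since $g$ is extended by zero outside $\Omega$'' is not sufficient on its own. Extending $g\in (C(\overline{\Omega}))^{3}$ by zero creates a jump across $\partial\Omega$, and $N_{l}(x,\cdot)$ need not vanish on $\partial\Omega$ when $x\in\Omega$ (Lemma~\ref{ref:lemma-second} goes the other way). The paper is also terse here, simply ``dropping the boundary terms which vanish''; the cleanest reading is that the computation is first done for $g\in (C^{\infty}_{0}(\Omega))^{3}$, as announced just before the definition of $\mathcal{R}^{\epsilon}$, where $g|_{\partial\Omega}=0$ makes the boundary contribution genuinely zero.
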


%
\begin{proof}[Proof of Theorem~\ref{thm:1}]
  The proof is obtained by explicitly taking the curl of the representation formula for
  $\mathcal{R}g$ in~\eqref{eq:rot-1} in its regularized form $\mathcal{R}^{\epsilon}g$. We
  obtain then
  \begin{equation*}
    \begin{aligned}
      & [\curl (\mathcal{R}^{\epsilon} g)]^{i}(x)
      \\
      &\  =-\epsilon_{i j k}\epsilon_{k l m}\frac{\partial}{\partial
        x_j}\int_{\Omega}(x-y)_l\,g^{m}(y)\int_1^\infty\psi(y+\alpha(x-y))\,\alpha(\alpha-1)
      \,\eta\Big(\frac{|x-y|}{\epsilon}\Big)\,d\alpha dy
      \\
      &\ =(\delta_{i m}\delta_{j l}-\delta_{i l}\delta_{j m})\times
      \\
      &\ \ \times\Big[\int_{\Omega}(x-y)_l\,g^{m}(y)\int_1^\infty\partial_{j}\psi(y+\alpha(x-y))\,\alpha^2(\alpha-1)
      \,\eta\Big(\frac{|x-y|}{\epsilon}\Big)\,d\alpha dy
      \\
      &\ \ +\int_{\Omega}\delta_{j l}\,g^{m}(y)\int_1^\infty\psi(y+\alpha(x-y))\,\alpha(\alpha-1)
      \,\eta\Big(\frac{|x-y|}{\epsilon}\Big)\,d\alpha dy
      \\
      &\ \ +\int_{\Omega}(x-y)_l\,g^{m}(y)\int_1^\infty\psi(y+\alpha(x-y))\,\alpha(\alpha-1)
      \,\eta'\Big(\frac{|x-y|}{\epsilon}\Big)\frac{(x-y)_j}{\epsilon|x-y|}\,d\alpha dy \Big],
      \\
      & =:A+B+C.
    \end{aligned}
  \end{equation*}
  Let us first consider  the term $A$, which can be written more explicitly as follows:
\begin{equation*}
  \begin{aligned}
    A&=\int_{\Omega}(x-y)_j\,g^{i}(y)\int_1^\infty{\partial_{j}}\psi(y+\alpha(x-y))\,\alpha^2(\alpha-1)
    \,\eta\Big(\frac{|x-y|}{\epsilon}\Big)\,d\alpha dy
    \\
    &\quad-\int_{\Omega}(x-y)_i\,g^{j}(y)\int_1^\infty{\partial_j}\psi(y+\alpha(x-y))\,\alpha^2(\alpha-1)
    \,\eta\Big(\frac{|x-y|}{\epsilon}\Big)\,d\alpha dy,
    \\
    &=:A_1+A_2.
  \end{aligned}
\end{equation*}
Observe now that
\begin{equation*}
  A_1=\int_{\Omega}\,g^{i}(y)\int_1^\infty\Big[\frac{d}{d\alpha}\psi(y+\alpha(x-y))\Big]\,\alpha^2(\alpha-1)
  \,\eta\Big(\frac{|x-y|}{\epsilon}\Big)\,d\alpha dy,
\end{equation*}
hence, integrating by parts with respect to $\alpha$, we obtain
\begin{equation*}
  \begin{aligned}
    A_1&=\int_{\Omega}\,g^{i}(y)\psi(y+\alpha(x-y))\,\alpha^2(\alpha-1)\Big|_{\alpha=1}^{\alpha=+\infty}
    \eta\Big(\frac{|x-y|}{\epsilon}\Big)\,d\alpha dy
    \\
    &-\int_{\Omega}\,g^{i}(y)\int_1^\infty\psi(y+\alpha(x-y))\,\Big[\frac{d}{d\alpha}\alpha^2(\alpha-1)\Big]
    \,\eta\Big(\frac{|x-y|}{\epsilon}\Big)\,d\alpha dy,
  \end{aligned}
\end{equation*}
and the boundary term vanishes identically. 

Concerning $A_2$ we observe that we can use~\eqref{eq:integrate_by_parts} to interchange
the derivative with respect to the $x$ variables into one in the $y$ variables, to write
(after integration by parts and again dropping the boundary terms which vanish)
\begin{equation*}
  \begin{aligned}
    A_2&=\int_{\Omega}(x-y)_i\,g^{j}(y)\int_1^\infty\frac{\partial}{\partial
      y_j}\psi(y+\alpha(x-y))\,\alpha^2 \,\eta\Big(\frac{|x-y|}{\epsilon}\Big)\,d\alpha dy
    \\
    &=-\int_{\Omega}(x-y)_i\,\frac{\partial g^{j}(y)}{\partial
      y_j}\int_1^\infty\psi(y+\alpha(x-y))\,\alpha^2
    \,\eta\Big(\frac{|x-y|}{\epsilon}\Big)\,d\alpha dy
    \\
    &\qquad+\int_{\Omega}\delta_{i j}\,g^{j}(y)\int_1^\infty\psi(y+\alpha(x-y))\,\alpha^2
    \,\eta\Big(\frac{|x-y|}{\epsilon}\Big)\,d\alpha dy
    \\
    &\qquad +\int_{\Omega}(x-y)_i\,g^{j}(y)\int_1^\infty\psi(y+\alpha(x-y))\,\alpha^2
    \,\eta'\Big(\frac{|x-y|}{\epsilon}\Big)\frac{(x-y)_j}{\epsilon|x-y|}\,d\alpha dy.
  \end{aligned}
\end{equation*}
Hence, we obtain that  
\begin{equation*}
  \begin{aligned}
    A_2&=-\int_{\Omega}(x-y)_i\,\big(\dive
    g(y)\big)\int_1^\infty\psi(y+\alpha(x-y))\,\alpha^2
    \,\eta\Big(\frac{|x-y|}{\epsilon}\Big)\,d\alpha dy
    \\
    &\qquad+\int_{\Omega}\,g^{i}(y)\int_1^\infty\psi(y+\alpha(x-y))\,\alpha^2
    \,\eta\Big(\frac{|x-y|}{\epsilon}\Big)\,d\alpha dy
    \\
    &\qquad
    +\int_{\Omega}\,g^{j}(y)\frac{(x-y)_i(x-y)_j}{\epsilon|x-y|}\int_1^\infty\psi(y+\alpha(x-y))\,\alpha^2
    \,\eta'\Big(\frac{|x-y|}{\epsilon}\Big)\,d\alpha \frac{dy}{\epsilon}.
  \end{aligned}
\end{equation*}
Then, adding together the two formulas, we finally proved that
\begin{equation*}
  \begin{aligned}
    A&=A_1+A_2
    \\
    &= -2\int_{\Omega}\,g^{i}(y)\int_1^\infty\psi(y+\alpha(x-y))\,\alpha(\alpha-1)
    \,\eta\Big(\frac{|x-y|}{\epsilon}\Big)\,d\alpha dy
    \\
    &\qquad+\int_{\Omega}(x-y)_i\,\dive g(y)\int_1^\infty\psi(y+\alpha(x-y))\,\alpha^2
    \,\eta\Big(\frac{|x-y|}{\epsilon}\Big)\,d\alpha dy
    \\
    &\qquad
    +\int_{\Omega}\,g^{j}(y)\frac{(x-y)_i(x-y)_j}{\epsilon|x-y|}\int_1^\infty\psi(y+\alpha(x-y))\,\alpha^2
    \,\eta'\Big(\frac{|x-y|}{\epsilon}\Big)\,d\alpha \frac{dy}{\epsilon}.
  \end{aligned}
\end{equation*}
Let us now consider the term $B$. We have
\begin{equation*}
  B=(\delta_{i m}\delta_{j l}-\delta_{i l}\delta_{j m})\int_{\Omega}\delta_{j l}\,g^{m}(y)
  \int_1^\infty\psi(y+\alpha(x-y))\,\alpha(\alpha-1)\,\eta\Big(\frac{|x-y|}{\epsilon}\Big)\,d\alpha
  dy, 
\end{equation*}
and we observe that since $\delta_{j l}\delta_{j l}=3$, it follows that 
\begin{equation*}
  \begin{aligned}
    B=&\,3\int_{\Omega}\,g^{i}(y)\int_1^\infty\psi(y+\alpha(x-y))\,\alpha(\alpha-1)
    \,\eta\Big(\frac{|x-y|}{\epsilon}\Big)\,d\alpha dy
    \\
    &\qquad-\int_{\Omega}\delta_{i l}\,g^{l}(y)\int_1^\infty\psi(y+\alpha(x-y))\,\alpha(\alpha-1)
    \,\eta\Big(\frac{|x-y|}{\epsilon}\Big)\,d\alpha dy
    \\
    =&\,2\int_{\Omega}\,g^{i}(y)\int_1^\infty\psi(y+\alpha(x-y))\,\alpha(\alpha-1)
    \,\eta\Big(\frac{|x-y|}{\epsilon}\Big)\,d\alpha dy.
  \end{aligned}
\end{equation*}
Concerning the term $C$ we can rewrite it as follows
  \begin{equation*}
    \begin{aligned}
      C&=(\delta_{i m}\delta_{j l}-\delta_{i l}\delta_{j m})\times
      \\
      &\qquad \times
      \int_{\Omega}(x-y)_l\,g^{m}(y)\frac{(x-y)_j}{\epsilon|x-y|}\int_1^\infty\psi(y+\alpha(x-y))\,\alpha(\alpha-1) 
      \,\eta'\Big(\frac{|x-y|}{\epsilon}\Big)\,d\alpha dy 
      \\
      &=\int_{\Omega}\frac{(x-y)_j(x-y)_j}{\epsilon|x-y|}\,g^{i}(y)\int_1^\infty\psi(y+\alpha(x-y))\,\alpha(\alpha-1)
      \,\eta'\Big(\frac{|x-y|}{\epsilon}\Big)\,d\alpha dy 
      \\
      &\qquad-\int_{\Omega}\frac{(x-y)_i(x-y)_j}{\epsilon|x-y|}\,g^{j}(y)\int_1^\infty\psi(y+\alpha(x-y))\,\alpha(\alpha-1)
      \,\eta'\Big(\frac{|x-y|}{\epsilon}\Big)\,d\alpha dy 
      \\
      &=\int_{\Omega}\frac{|x-y|}{\epsilon}\,g^{i}(y)\int_1^\infty\psi(y+\alpha(x-y))\,\alpha(\alpha-1)
      \,\eta'\Big(\frac{|x-y|}{\epsilon}\Big)\,d\alpha dy 
      \\
      &\qquad-\int_{\Omega}\frac{(x-y)_i(x-y)_j}{\epsilon|x-y|}\,g^{j}(y)\int_1^\infty\psi(y+\alpha(x-y))\,\alpha(\alpha-1)
      \,\eta'\Big(\frac{|x-y|}{\epsilon}\Big)\,d\alpha dy.
    \end{aligned}
\end{equation*}
Then, adding together $A+B+C$ from the resulting formulas, and by using that $\dive g=0$,
we  finally obtain
\begin{equation*}
  \begin{aligned}
    & [\curl (\mathcal{R}^{\epsilon} g)]^i(x)
    \\
    &=\int_{\Omega}\frac{|x-y|}{\epsilon}\,g^{i}(y)\,\eta'\Big(\frac{|x-y|}{\epsilon}\Big)\int_1^\infty\psi(y+\alpha(x-y))\,\alpha^2
    \,d\alpha dy
    \\
    &\qquad-
    \int_{\Omega}\frac{|x-y|}{\epsilon}\,g^{i}(y)\,\eta'\Big(\frac{|x-y|}{\epsilon}\Big)\int_1^\infty\psi(y+\alpha(x-y))\,\alpha
    \,d\alpha dy
    \\
    &\qquad+\int_{\Omega}\frac{(x-y)_i(x-y)_j}{\epsilon|x-y|}\,g^{j}(y)
    \,\eta'\Big(\frac{|x-y|}{\epsilon}\Big)\int_1^\infty\psi(y+\alpha(x-y))\,\alpha
    \,d\alpha dy,
    \\
    &=:S^\epsilon_{1}(x)+S^\epsilon_2(x)+S^\epsilon_3(x).
  \end{aligned}
\end{equation*}
We now take the limit as $\epsilon\to0^+$ and we start with the term $S^\epsilon_1(x)$, for which we
make the change of variables $\alpha=\frac{\xi}{|x-y|}$ to obtain 
\begin{equation*}
  S^\epsilon_1(x)=\int_{\Omega}g^{i}(y)\,\eta'\Big(\frac{|x-y|}{\epsilon}\Big)\int_{|x-y|}^\infty
  \psi\Big(y+\xi\frac{x-y}{|x-y|}\Big)\,\frac{\xi^2}{|x-y|^2} 
  \,d\xi \frac{dy}{\epsilon}.
\end{equation*}
Then, with the further change of variables $r=\xi-|x-y|$ we obtain
\begin{equation*}
  S^\epsilon_1(x)=
\int_{1<|x-y|<2}g^{i}(y)\,\eta'\Big(\frac{|x-y|}{\epsilon}\Big)\int_0^\infty\psi\Big(x+r\frac{x-y}{|x-y|}\Big)\,\frac{(r+|x-y|)^2}{|x-y|^2} 
  \,dr  \frac{dy}{\epsilon}.
\end{equation*}
If we set $z=\frac{x-y}{\epsilon}$, then 
\begin{equation*}
  \begin{aligned}
    S^\epsilon_1(x)&=\int_{1<|z|<2}g^{i}(x-\epsilon
    z)\,\eta'(z)\int_0^\infty\psi\Big(x+r\frac{z}{|z|}\Big)\,\frac{(r+\epsilon^2|z|)^2}{\epsilon^2|z|^2}
    \,dr \frac{\epsilon^3 dz}{\epsilon}
    \\
    &=\int_{1<|z|<2}g^{i}(x-\epsilon
    z)\,\eta'(z)\int_0^\infty\psi\Big(x+r\frac{z}{|z|}\Big)\,\frac{(r+\epsilon|z|)^2}{|z|^2}
    \,dr dz,
  \end{aligned}
\end{equation*}
and, by using the continuity of $g$, we get (by the dominated convergence theorem) that
the latter integral converges, as $\epsilon\to0$, in such a way that
\begin{equation*}
  \lim_{\epsilon\to0} S^\epsilon_1(x)=g^{i}(x)\int_{1<|z|<2}
\frac{\,\eta'(z)}{|z|^2}\int_0^\infty\psi\Big(x+r\frac{z}{|z|}\Big)\,r^{2}
    \,dr dz.
\end{equation*}
Finally, by introducing the radial $ \rho=|z|$ and angular $u=z/|z|$ coordinates
respectively  one gets
\begin{equation*}
  \begin{aligned}
    &\int_{1<|z|<2}
    \frac{\,\eta'(z)}{|z|^2}\int_0^\infty\psi\Big(x+r\frac{z}{|z|}\Big)\,r^{2} \,dr
    dz
    \\
    &=\int_{1}^{2}\eta'(\rho)\,d\rho\int_{S^{2}}\,du\int_{0}^{\infty}\psi(x+r u)\,r^{2}\,dr
    =\left(\eta(2)-\eta(1)\right) \int_{\R^{3}}\psi(w)\,dw=1.
  \end{aligned}
\end{equation*}
Therefore,
\begin{equation*}
  \lim_{\epsilon\to0} S^\epsilon_1(x)=\left(\eta(2)-\eta(1)\right)  g^{i}(x)
  \int_{\R^{3}}\psi(w)\,dw=g^{i}(x). 
\end{equation*}
With the same changes of variables we also get 
\begin{equation*}
  \begin{aligned}
    S^\epsilon_2(x)&=-\epsilon\int_{1<|z|<2}g^{i}(x-\epsilon z)\frac{\eta'(z)}{
      |z|}\int_0^\infty\psi\Big(x+r\frac{z}{|z|}\Big)\,(r+\epsilon|z|) \,dr
    dz=\mathcal{O}(\epsilon),
   \\ 
   S^\epsilon_3(x)&=\epsilon\int_{1<|z|<2}g^{i}(x-\epsilon
    z)\frac{\eta'(z) z_i z_j}{|z|^3}\int_0^\infty\psi\Big(x+r\frac{z}{|z|}\Big)\,(r+\epsilon|z|)
    \,dr dz=\mathcal{O}(\epsilon),
  \end{aligned}
\end{equation*}
ending the proof.
\end{proof}
\section{On the regularity of the solutions}
In this section we prove the main result of this paper, namely the existence of a
classical solution to the curl system~\eqref{eq:curl}, under the assumption that $g\in
(C_D(\Omega))^3$.

We first recall the following result, which is proved in
Griesinger~\cite[Thm.~3.5]{Gri1990b}, and which follows directly by inspection of the
kernel $N(x,y)$
\begin{lemma}
  There exist functions $K_{i j}$ and $G_{i j}$ for any $i,j=1,2,3$ such that
\begin{equation*}
\partial_{x_{j}}N_{i}(x,y)\;=K_{i j}(x,x-y)+G_{i j}(x,y),
\end{equation*}
where $K_{i j}(x,\cdot)$ is a Calder\`{o}n-Zygmund singular kernel and $G_{i j}$ is a weakly
singular kernel in the sense that, if one sets
\begin{equation*}
k_{i j}(x,z)\equiv |z|^{3}K_{i j}(x,z),
\end{equation*}
there exist constants $c=c(\psi)$ and $M=M(\psi,\,\diam\,\Omega)$ such that:
\begin{enumerate}
\item $ \quad k_{i j}(x,t z)\,=\,k_{i j}(x,z)\qquad\forall\, x\in\Omega,\ \forall\, z\neq
  0,\ \forall\, t>0$;
\item $ \quad \Vert k_{i j}(x,z)\Vert_{L^{\infty}(\Omega\times S^{2})} $  is finite;
\item $ \quad \int_{|z|=1}k_{i j}(x,z)\,dz = 0\quad \forall\, x\in\Omega$;
\item $\quad |G_{i j}(x,y) | \leq c\,(\diam\,\Omega)^{2}|x-y|^{-2}$;
\item $\quad |\partial_{x_{j}}N_{i}(x,y)| \leq M |x-y|^{-3}\quad\forall\, x\in\Omega\quad
  \forall\, y\in\R^{3}\backslash\{x\}$.
\end{enumerate}
\end{lemma}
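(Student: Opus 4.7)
The plan is to differentiate the defining formula~\eqref{eq:enne} directly in $x_j$, perform the same change of variables as in Lemma~\ref{lem:first-lemma}, and then group the resulting expression by powers of $|z|$, where $z:=x-y$ and $\hat z:=z/|z|$.

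Differentiation under the integral sign (legitimate for $x\neq y$, since $\psi\in C^\infty_0$ makes the $\alpha$-integrand smooth with compact $\alpha$-support) yields
\begin{equation*}
\partial_{x_j}N_i(x,y)=\delta_{ij}\!\int_1^\infty\!\psi(y+\alpha z)\alpha(\alpha-1)\,d\alpha\;+\;z_i\!\int_1^\infty\!\partial_j\psi(y+\alpha z)\alpha^2(\alpha-1)\,d\alpha.
\end{equation*}
Setting $\xi=\alpha|z|$ and then $r=\xi-|z|$, using $y+\xi\hat z=x+r\hat z$, and introducing the direction-dependent quantities
\begin{equation*}
A_m(x,\hat z):=\int_0^\infty\psi(x+r\hat z)r^m\,dr,\qquad B_m(x,\hat z):=\int_0^\infty\partial_j\psi(x+r\hat z)r^m\,dr,
\end{equation*}
a short computation rewrites the above as
\begin{equation*}
\partial_{x_j}N_i(x,y)=\frac{\delta_{ij}A_2+\hat z_iB_3}{|z|^3}\;+\;\frac{\delta_{ij}A_1+2\hat z_iB_2}{|z|^2}\;+\;\frac{\hat z_iB_1}{|z|}.
\end{equation*}
I would then define $k_{ij}(x,z):=\delta_{ij}A_2(x,\hat z)+\hat z_iB_3(x,\hat z)$, $K_{ij}(x,z):=k_{ij}(x,z)/|z|^3$, and let $G_{ij}(x,y)$ be the sum of the remaining two terms.

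Properties (1), (2), (4), (5) would then follow by inspection. Homogeneity (1) is built into the formula for $k_{ij}$, because $A_2$ and $B_3$ depend on $z$ only through the direction $\hat z$. The bound (2) comes from the compact support of $\psi$ and $\nabla\psi$, which make $A_2,B_3$ uniformly bounded on $\Omega\times S^2$. The estimate (4) on $G_{ij}$ follows from the corresponding boundedness of $A_1,B_2,B_1$, with $|z|\le\diam\Omega$ used to absorb the lone $1/|z|$ into $(\diam\Omega)/|z|^2$. Finally (5) is obtained by combining (2), (4) and the same trick on the range of $|z|$ that matters (for $|z|>1+\diam\Omega$ the integrand in~\eqref{eq:enne} vanishes).

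The main obstacle is the Calder\'on--Zygmund cancellation (3), namely $\int_{|u|=1}k_{ij}(x,u)\,du=0$. I would handle it term by term. Passing from polar to Cartesian coordinates via $z=ru$, $dz=r^2\,dr\,du$ gives $\int_{S^2}A_2(x,u)\,du=\int_{\R^3}\psi(x+z)\,dz=1$, so the first summand contributes $\delta_{ij}$. For the second, the identity $u_i r^3\,dr\,du=z_i\,dz$ together with one integration by parts in $w_j=x_j+z_j$ (the boundary term vanishes since $\spt\psi\subset B$) yields
\begin{equation*}
\int_{S^2}u_iB_3(x,u)\,du=\int_{\R^3}z_i\partial_j\psi(x+z)\,dz=-\delta_{ij}\int_{\R^3}\psi(w)\,dw=-\delta_{ij},
\end{equation*}
and the two contributions cancel. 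This cancellation is the real content of the lemma: it is precisely what is needed for $K_{ij}$ to define a bounded operator on the H\"older and Dini spaces used in the next section, and it can be read as a manifestation both of the normalization $\int\psi=1$ and of the fact that $\mathcal{R}$ is a right-inverse of $\curl$ established in Theorem~\ref{thm:1}.
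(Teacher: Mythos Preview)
Your computation is correct: the decomposition you obtain, the verification of homogeneity and boundedness, and especially the cancellation argument for (3) via $\int_{S^2}A_2\,du=\delta_{ij}$ and $\int_{S^2}u_iB_3\,du=-\delta_{ij}$ are all sound. The paper does not actually prove this lemma; it simply recalls the result from Griesinger~\cite[Thm.~3.5]{Gri1990b} and remarks that it ``follows directly by inspection of the kernel $N(x,y)$'' --- your argument is precisely that inspection carried out in full, so there is nothing to compare beyond noting that you have supplied what the paper only cites.

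One small point worth tightening: your notation $B_m(x,\hat z)$ suppresses the index $j$ coming from $\partial_j\psi$, and your justification of (4) and (5) tacitly assumes $|z|\le\diam\Omega$ (or a similar bound) when absorbing the $|z|^{-1}$ term into $|z|^{-2}$; this is harmless since only $x,y\in\Omega$ (or $y\in B_R$) matter in the subsequent applications, but you may want to state the restriction explicitly.
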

The whole $L^{p}$-theory for the operator $\mathcal{R}$ follows then by using the above
estimates within the framework of singular integrals. We will not use that theory, but
nevertheless we need to use the above estimates to control the growth of the kernel
$N_{i}(x,y)$, as $x$ gets close to $y$.

As usual in classical potential theory, writing an explicit representation formula for the
first order derivatives of the vector field $\mathcal{R}g$ is a main technical fact. We
will obtain it through a limit of the derivatives of $\mathcal{R}^{\epsilon}g$. To this
end we start by differentiating $\mathcal{R}^{\epsilon}g$, which is smooth. We first
extend $g$ by zero in $\R^{3}\backslash{\Omega}$ We thus obtain 
\begin{equation*}
  \begin{aligned}
    (\mathcal{R}g)^{k}(x)&=-\epsilon_{i j k}\int_{\Omega}\,g^{j}(y) N_i(x,y)
    \,\eta\Big(\frac{|x-y|}{\epsilon}\Big)\,dy
    \\
    &=-\epsilon_{i j k}\int_{B_{R}}\,g^{j}(y) N_i(x,y)
    \,\eta\Big(\frac{|x-y|}{\epsilon}\Big)\,dy, 
  \end{aligned}
\end{equation*}
where $B_{R}$ is a ball of radius $R>0$ large enough such that $\Omega \subset\subset
B_{R}$.  By the previous results (especially
Lemma~\ref{lem:change-derivatives}) 
it follows that
\begin{equation*}
  \begin{aligned}
    \partial_{x_{m}}(\mathcal{R}^{\epsilon}&g)^{k}(x)=-\epsilon_{i j
      k}\int_{B_{R}}\,g^{j}(y)\partial_{x_{m}}\left[N_{i}(x,y)\,\eta\left(\frac{|x-y|}{\epsilon}\right)\right]\,dy
    \\
    &=-\epsilon_{i j k}\int_{B_{R}}\,
    \left[g^{j}(y)-g^{j}(x)\right]\partial_{x_{m}}\left[N_{i}(x,y)\,\eta\left(\frac{|x-y|}{\epsilon}\right)\right]\,dy
    \\
    &\qquad-\epsilon_{i j
      k}g^{j}(x)\int_{B_{R}}\partial_{x_{m}}\left[N_{i}(x,y)\,\eta\left(\frac{|x-y|}{\epsilon}\right)\right]\,dy
    \\
    &=\epsilon_{i j k}\int_{B_{R}}\,
    \left[g^{j}(y)-g^{j}(x)\right]\partial_{x_{m}}\left[N_{i}(x,y)\,\eta\left(\frac{|x-y|}{\epsilon}\right)\right]\,dy
    \\
    &\quad-\epsilon_{i j
      k}g^{j}(x)\int_{B_{R}}\eta\left(\frac{|x-y|}{\epsilon}\right)(x_{i}-y_{i})
    \int_{1}^{\infty}\partial_{m}\psi(y+\alpha(x-y)) \alpha(\alpha-1)\,d\alpha\,dy 
    \\
    &\quad +\epsilon_{i j
      k}g^{j}(x)\int_{B_{R}}\partial_{y_{m}}\left[N_{i}(x,y)\,\eta\left(\frac{|x-y|}{\epsilon}\right)\right]\,dy.
\end{aligned}
\end{equation*}
Since when $\epsilon<\textrm{dist}(\partial
B_{R},\overline{\Omega})$, then $\eta\left(\frac{|x-y|}{\epsilon}\right)=1$, by the
Gauss-Green formula the last integral in the above formula is equal to
\begin{equation*}
  \epsilon_{i j
      k}g^{j}(x)\int_{\partial B_{R}}N_{i}(x,y)\nu_{j}(y)\,d\sigma_{y}.
\end{equation*}

The previous computation suggests to put forward a conjecture about the limit as
$\epsilon$ goes to zero, which will be proved in the next theorem, that is the main result
of the paper. From the theorem below, by recalling Proposition~\ref{prop:proposition4},
it will follow directly Theorem~\ref{thm:main_theorem}.
\begin{theorem} 
  Let $\Omega\subset \R^3$ be a bounded open set, star-shaped with respect to the closed unit
  ball $\overline{B}$ centered at the origin and let $R>0$ large enough to get 
  $\Omega\subset \subset B(0,R)$.  Furthermore, let $\psi\in C^\infty_0(\R^3)$ be such
  that $\int_{\R^{3}}\psi(x)\,dx=1$, and $\spt \psi\subset B(0,1)$. Let be given $g\in
  (C_{D}^{0}(\overline{\Omega}))^3$ such that $\frac{
    \partial g^{i}(x)}{\partial {x_i}} $ exist for $i=1,2,3$, for all $x\in \Omega$,
  and $\dive g=0$.
  
  Let the functions $v_{m}^{k}(x)$ for $k,m=1,2,3$ be defined as follows:
  \begin{equation*}
    \begin{aligned}
      v_{m}^{k}(x):=-\epsilon_{i j k}&\Bigg[\int_{B_{R}}
      \left[g^{j}(y)-g^{j}(x)\right]\partial_{x_{m}}N_{i}(x,y)\,dy
      \\
      &\qquad+g^{j}(x)\int_{B_{R}}(x_{i}-y_{i})\int_{1}^{\infty}\partial_{m}\psi(y+\alpha(x-y))\,\alpha(\alpha-1)\,d\alpha\,dy 
      \\
      &\qquad -g ^{j} (x)\int_{\partial B_{R}}N_{i}(x,y)\nu_{m}(y)d\sigma_{y}\Bigg].
    \end{aligned}
  \end{equation*}
  Then:
  \begin{enumerate}
  \item[1)]\quad $v_{m}^{k}(x)$ is well-defined for all $x\in\Omega$;
    \vspace{2mm}
  \item[2)]\quad $\frac{\partial(\mathcal{R}^{\epsilon}g)^{k}(x)}{\partial {x_{m}}} $
    converges uniformly to $v_{m}^{k}(x) $ on any compact $\Omega'\subset\subset\Omega$;
    \vspace{2mm}
  \item[3)] \quad   $\frac{\partial(\mathcal{R} g)^{k}(x)}{\partial {x_{m}}}\equiv v_{m}^{k}(x)$
    in $\Omega$;
    \vspace{2mm}
  \item[4)] \quad  $\mathcal{R}g\in (C^{1}(\Omega))^3$.
  \end{enumerate}
\end{theorem}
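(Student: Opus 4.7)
The plan is to verify the four claims in sequence. For (1), part (5) of the preceding lemma gives the pointwise bound $|\partial_{x_{m}}N_{i}(x,y)|\leq M|x-y|^{-3}$, so the integrand in the first defining term of $v_{m}^{k}$ is dominated by $C\,\omega(g,|x-y|)/|x-y|^{3}$; passing to spherical coordinates around $x$ leaves a multiple of $\int_{0}^{\diam\Omega}\omega(g,\rho)/\rho\,d\rho$, which is finite by the Dini hypothesis on $g$. The two remaining integrals in $v_{m}^{k}$ involve either a smooth integrand (thanks to $\partial_{m}\psi$ having compact support) on the bounded set $B_{R}$, or a surface integral over $\partial B_{R}$ at positive distance from $\Omega$, so both are manifestly well-defined.

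For (2), I would start from the explicit expression for $\partial_{x_{m}}(\mathcal{R}^{\epsilon}g)^{k}=I_{1}^{\epsilon}+I_{2}^{\epsilon}+I_{3}^{\epsilon}$ already derived in the excerpt. Once $\epsilon<\mathrm{dist}(\partial B_{R},\overline{\Omega})$ we have $\eta(|x-y|/\epsilon)\equiv 1$ near $\partial B_{R}$, so $I_{3}^{\epsilon}$ already equals the surface integral appearing in $v_{m}^{k}$, and $I_{2}^{\epsilon}$ converges to its counterpart in $v_{m}^{k}$ uniformly in $x$ by dominated convergence (the integrand is smooth and its $(\alpha,y)$-support is effectively bounded). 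The delicate piece is $I_{1}^{\epsilon}$: expanding
\[
\partial_{x_{m}}\!\left[N_{i}\,\eta\!\left(\tfrac{|x-y|}{\epsilon}\right)\right] = \eta\!\left(\tfrac{|x-y|}{\epsilon}\right)\partial_{x_{m}}N_{i} + N_{i}\,\eta'\!\left(\tfrac{|x-y|}{\epsilon}\right)\tfrac{(x-y)_{m}}{\epsilon|x-y|},
\]
the first summand differs from $\partial_{x_{m}}N_{i}$ only on $\{|x-y|\leq 2\epsilon\}$, so the discrepancy in $I_{1}^{\epsilon}$ is bounded above by $C\int_{0}^{2\epsilon}\omega(g,\rho)/\rho\,d\rho$, which tends to zero as $\epsilon\to 0^{+}$ uniformly in $x$ by the tail of the Dini integral. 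The second summand is supported in the shell $\epsilon\leq|x-y|\leq 2\epsilon$, where $|N_{i}|\leq C\epsilon^{-2}$, $|\eta'|\leq 2$ and $|(x-y)_{m}|/|x-y|\leq 1$; the factor $1/\epsilon$ combined with shell volume $\sim\epsilon^{3}$ then yields an absolute bound of $C\,\omega(g,2\epsilon)\to 0$, uniformly in $x\in\Omega'$.

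Claim (3) is then the standard conclusion: since $\mathcal{R}^{\epsilon}g\to\mathcal{R}g$ pointwise (immediate from dominated convergence, using $|g(y)\,N_{i}(x,y)|\leq C|g|/|x-y|^{2}\in L^{1}_{\text{loc}}$) and $\partial_{x_{m}}(\mathcal{R}^{\epsilon}g)^{k}\to v_{m}^{k}$ uniformly on compacta, the limit is differentiable with partial derivatives $v_{m}^{k}$. Claim (4) reduces to continuity of each $v_{m}^{k}$ on $\Omega$; splitting the singular integral defining $v_{m}^{k}$ over $\{|x-y|\leq\delta\}$ and $\{|x-y|>\delta\}$, the inner part has absolute value at most $C\int_{0}^{\delta}\omega(g,\rho)/\rho\,d\rho$, which can be made arbitrarily small uniformly in $x$ on a compact set, while the outer part is continuous in $x$ by dominated convergence, its integrand being jointly continuous in $(x,y)$ away from the diagonal $\{y=x\}$.

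The principal obstacle is the uniform convergence argument in (2): both the truncation error on $\{|x-y|\leq 2\epsilon\}$ and the shell contribution arising from $\eta'$ must vanish uniformly in $x\in\Omega'$, and both rely crucially on the Dini continuity of $g$. Merely $g\in C(\overline{\Omega})$ would leave bounds of type $\omega(g,2\epsilon)$ without the integrable tail that Dini continuity supplies, so classical $C^{1}$-regularity of $\mathcal{R}g$ would generally fail — which is precisely why the Dini hypothesis appears in Theorem~\ref{thm:main_theorem}.
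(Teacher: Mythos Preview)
Your argument is correct and follows essentially the same route as the paper, including the key use of the Dini tail $\int_{0}^{2\epsilon}\omega(g,\rho)/\rho\,d\rho$ to control the truncation error in (2) and the standard passage from uniform convergence of derivatives to (3).

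Two small inaccuracies to tighten. First, the integrand in the second defining term of $v_{m}^{k}$ is \emph{not} smooth: it has exactly the structure of $N_{i}(x,y)$ with $\partial_{m}\psi$ in place of $\psi$, hence carries a weak singularity $\lesssim|x-y|^{-2}$ by Lemma~\ref{lem:first-lemma}; the paper handles this by recognising it as the representation formula applied to $g^{j}\equiv 1$ with $\partial_{m}\psi$ replacing $\psi$, and then invoking Proposition~\ref{prop:proposition4}. Second, ``dominated convergence'' alone gives only pointwise convergence of $I_{2}^{\epsilon}$ in $x$, not uniform convergence on $\Omega'$; the paper instead bounds the discrepancy by $c\,\|g\|_{\infty}\,\|\eta(|x-\cdot|/\epsilon)-1\|_{L^{q}(B(x,\diam\Omega))}$ for some $q>3$, which vanishes uniformly in $x$ (equivalently, one may directly estimate the integral over $\{|x-y|\leq 2\epsilon\}$ of the $|x-y|^{-2}$-singular kernel by $C\epsilon$). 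Your direct continuity argument for (4) is a valid alternative; the paper simply observes that each $v_{m}^{k}$ is a locally uniform limit of the smooth functions $\partial_{x_{m}}(\mathcal{R}^{\epsilon}g)^{k}$.
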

\begin{proof}
  The proof of the above result are based on the accurate analysis of the various
  integrals, for fixed $k,m=1,2,3$. In particular, many of the calculations are very close
  to those used in the analysis of the divergence equation in~\cite{BL2017}, and hence we
  sketch the main points, without giving full details.

To prove 1), fix any $x\in\Omega$. Remark that, after its extension by zero outside
$\Omega$, $g$ still belongs to $(L^{\infty}(\R^{3}))^{3}$. Then, for any
$\epsilon<\textrm{dist}(x,\partial\,\Omega)$, one has
\begin{equation*}
  \begin{aligned}
    \int_{B_{R}} \left|g^{j}(y)-g^{j}(x)\right|\,&\left|\partial_{x_{m}}N_{i}(x,y)\right|\,dy
    \\
    &=\int_{B(x,\epsilon)}\left|g^{j}(y)-g^{j}(x)\right|\left|\partial_{x_{m}}N_{i}(x,y)\right|\,dy
    \\
    &\qquad+\int_{\{|x-y|\geq
      \epsilon\}\cap
      B_{R}}\left|g^{j}(y)-g^{j}(x)\right|\left|\partial_{x_{m}}N_{i}(x,y)\right|\,dy,
\\
&=: D+E.
  \end{aligned}
\end{equation*}
Since $B(x,\epsilon)\subset\Omega$, it follows that, for all $y\in \Omega$ with $y\neq x$, 
\begin{equation*}
  \begin{aligned}
    D&\leq\int_{B(x,\epsilon)}
    \frac{\left|g^{j}(y)-g^j(x)\right|}{|y-x|}|y-x|\;|\partial_{x_{m}}N_{i}(x,y)|\,dy
     \\
     &
    \leq\int_{B(x,\epsilon)} \frac{\omega(g,|y-x|)}{|y-x|}\;\frac{M}{|y-x|^{n-1}}\,dy,
\end{aligned}
\end{equation*}
where $\omega(g,\rho)$ is the modulus of continuity of $g$ in $\Omega$. By introducing the radial
and angular coordinates, we obtain 
\begin{equation*}
D\leq 4\pi M\; \int_{0}^{\epsilon} \frac{\omega(g,\rho)}{\rho}\,d\rho,
\end{equation*}
and, by the hypothesis that $g$ is Dini continuous, the integral is finite.

Furthermore, since both $g$ and $\partial_{x_{m}}N_{i}(x,y)$ are bounded on $\{|x-y|\geq
\epsilon\}$, the term  $E$ is finite as well.

Finally, since $\partial_{m}\psi\in C^{\infty}_{0}(\R^{n})$ and $\spt \partial_{m}\psi
\subset B(0,1)$, it follows that
\begin{equation}
  \label{eq:g-costant}
  \int_{\Omega}(x_{i}-y_{i})\int_{1}^{\infty}\partial_{m}\psi(y+\alpha(x-y))\,\alpha(1-\alpha)\,d\alpha\,dy, 
\end{equation}
is the value of the same representation formula~\eqref{eq:Regina} corresponding to the
bounded function $g^{j}\equiv 1$, evaluated by using $\partial_{m}\psi\in
C^{\infty}_{0}(\R^{3)}$ instead of $\psi$. We observe that since
$\spt \partial_{m}\psi\subset B$, the properties proved in
Proposition~\ref{prop:proposition4} are still valid. Hence, by the previous results, also
the integral \eqref{eq:g-costant} is globally bounded, and statement 1) follows.

To prove 2), fix any $\Omega'\subset\subset\Omega$. Thus, for all $x\in\Omega'$ and
$\epsilon>0$ such that $2\epsilon<\textrm{dist}(\overline{\Omega'},\partial \Omega)$, it
follows that
\begin{equation*}
  \begin{aligned}
    & |\partial_{x_{m}}(\mathcal{R}^{\epsilon}g)^{i}(x)-v^k_m|
    \\
    &\qquad \leq\int_{B(x,2\epsilon)} |g^j(x)-g^j(y)|\, |\partial_{x_{m}}N_{i}(x,y)|\,dy
    \\
    &\qquad\quad+\int_{B(x,2\epsilon)}
    |g^j(y)-g^j(x)| \,|N_{i}(x,y)|\ \eta'\left(\frac{|x-y|}{\epsilon}\right)\frac{|x_{m}-y_{m}|}{|x-y|}\,\frac{dy}{\epsilon}
    \\
    &\qquad\quad+ \int_{B(x,2\epsilon)}|g^j(x)|\,
    |x_{i}-y_{i}|\int_{1}^{\infty}|\partial_{m}\psi(y+\alpha(x-y))| \,
    \alpha(\alpha-1)\,d\alpha\,dy  
    \\
    &=:F + G + H.
\end{aligned}
\end{equation*}
By the definition of modulus of continuity it follows that
\begin{equation*}
F\leq M \int_{ B(x,2\epsilon)}\frac{
  |g^{j}(x)-g^{j}(y)|}{|y-x|^{3}}\,dy \leq 4\pi M \int_{\rho<2\epsilon}\frac{\omega(g,\rho)}{\rho}d\rho.
\end{equation*}
By the Dini continuity of $g$ --and the consequent absolute continuity of the integral of
the modulus of continuity--  the last term vanishes as $\epsilon$ goes to zero,
independently of $x\in\Omega'$.

The second term $G$ is estimated in the same way since by the properties of $\psi$ and
$\eta$ it follows that
\begin{equation*}
  G\leq \int_{\epsilon}^{2\epsilon}\frac{\omega(g,\rho)}{\rho}\,d\rho,
\end{equation*}
and, again by the absolute continuity of the integral, the term $G$ vanishes as $\epsilon$ goes to
zero, independently of $x\in\Omega'$.

Finally, 
by using $\partial_{j}\psi$ instead of $\psi$ as in the previous proofs, it follows that for any $q\,>3$ and suitable
constants $c', c''$
\begin{equation*}
  \begin{aligned}
    |H|&\leq c'\max_{\overline{\Omega}}|g^{j}(x)|\,\left\Vert
      \eta\left(\frac{|x-y|}{\epsilon}\right)-1\right\Vert_{L^{q}(\Omega)}
    \\
    &\leq c''
    \left\|\eta\left(\frac{|x-y|}{\epsilon}\right)-1\right\|_{L^{q}\left(B(x,\,\diam\,\Omega)\right)}
  \end{aligned}
\end{equation*}
Since the latter norm vanishes as $\epsilon$ goes to zero, for any $q>3$, and
independently of $x\in\Omega$, point 2) follows.

By the classical theorem on a converging sequence of functions whose derivatives converge
uniformly, it follows 3), while 4)  follows since $\mathcal{R}^{\epsilon} g\in C^\infty(\R^3)$.
\end{proof}
\begin{remark}
  By a change of variables one can easily consider the case in which $\Omega$ is
  star-shaped with respect to a ball of positive radius $r$, but not necessarily centered
  at the origin.
\end{remark}
\section*{Acknowledgments}
The research that led to the present paper was partially supported by a grant of the group GNAMPA of INdAM.
%
\def\ocirc#1{\ifmmode\setbox0=\hbox{$#1$}\dimen0=\ht0 \advance\dimen0
  by1pt\rlap{\hbox to\wd0{\hss\raise\dimen0
  \hbox{\hskip.2em$\scriptscriptstyle\circ$}\hss}}#1\else {\accent"17 #1}\fi}
  \def\cprime{$'$} \def\polhk#1{\setbox0=\hbox{#1}{\ooalign{\hidewidth
  \lower1.5ex\hbox{`}\hidewidth\crcr\unhbox0}}} \def\cprime{$'$}
\providecommand{\href}[2]{#2}
\providecommand{\arxiv}[1]{\href{http://arxiv.org/abs/#1}{arXiv:#1}}
\providecommand{\url}[1]{\texttt{#1}}
\providecommand{\urlprefix}{URL }

\end{document}